\crefname{hypothesis}{Hypothesis}{Hypotheses}
\def\<{\left\langle}
\def\>{\right\rangle}
\def\E{\mathcal{E}}
\def\M{\mathcal{M}}
\def\T{{\rm T}}
\def\D{{\rm D}}
\def\B{\mathcal{B}}
\def\I{\mathcal{I}}
\def\R{\mathbb{R}}
\def\Re{{\rm Retr}}
\def\g{{\rm grad}}
\def\G{\mathcal{G}}
\def\V{\mathcal{V}}
\def\N{\mathcal{N}}
\def\h{{\rm Hess}}
\def\S{\mathbb{S}}
\def\rr{{\rm rank}}
\def\dd{{\rm diag}}
\def\P{{\rm Proj}}
\def\O{\mathcal{O}}
\def\({\left(}
\def\){\right)}
\title{A Riemannian dimension-reduced second order method with application in sensor network localization\thanks{Submitted to the editors DATE.
		\funding{The research of Kim-Chuan Toh and Nachuan Xiao is supported by  Academic Research Fund Tier 3 grant call (MOE-2019-T3-1-010). The research of Yinyu Ye is partially supported by National University of Singapore when he was visiting there.}}}
\author{Tianyun Tang\thanks{Department of Mathematics, National
		University of Singapore,  
		(\email{ttang@u.nus.edu}).}
	\and Kim-Chuan Toh\thanks{Department of Mathematics, and Institute of 
		Operations Research and Analytics, National
		University of Singapore,
		(\email{mattohkc@nus.edu.sg}).}
	\and Nachuan Xiao\thanks{The Institute of Operations Research and Analytics, National University of Singapore, (\email{xnc@lsec.cc.ac.cn}).}
	\and Yinyu Ye\thanks{Department of Management Science and Engineering, Stanford University, (\email{yinyu-ye@stanford.edu})}
}
\def\O{\mathcal{O}}
\begin{document}

\maketitle

% REQUIRED
\begin{abstract}
In this paper, we propose a cubic-regularized Riemannian optimization method (RDRSOM), which partially exploits the second order information and achieves the iteration complexity of $\O(1/\epsilon^{3/2}).$ In order to reduce the per-iteration computational cost, we further propose a practical version of (RDRSOM), which is an extension of the well known Barzilai-Borwein method and achieves the iteration complexity of $\O(1/\epsilon^{2})$. We apply our method to solve a nonlinear formulation of the wireless sensor network localization problem whose feasible set is a Riemannian manifold that has not been considered in the literature before. Numerical experiments are conducted to verify the high efficiency of our algorithm compared to state-of-the-art Riemannian optimization methods and other nonlinear solvers. 
\end{abstract}

% REQUIRED
\begin{keywords}
Riemannian optimization, cubic regularization, sensor network localization
\end{keywords}

% REQUIRED
\begin{MSCcodes}
90C30, 90C35, 90C53
\end{MSCcodes}

\section{Introduction}
%%%%%%%%%%%%%%%%%%%%%%Riemannian optimization%%%%%%%%%%%%%%%%%%%%%%%%%%%%%%%%%%%%%%%%%%%%%%%%%%%%%%%%%%%%%%%%%%%%%%%%%%%%%%%%%%%%%%%%%%%%%%%%%%%%%
\subsection{Riemannian optimization}\label{subsec-rieopt}
In this paper, we consider the following problem:
\begin{equation}\label{Prob_Rie}
\min\left\{ f(x):\ x\in \M\right\}.
\end{equation}
Here $f:\M\rightarrow \R$ is a sufficiently smooth function defined on the Riemannian manifold $\M$ that is embedded in a finite-dimensional {Euclidean} space $\E$. Problem (\ref{Prob_Rie}) has many applications including machine learning  \cite{arjovsky2016unitary,bansal2018can,huang2018orthogonal,lezcano2019trivializations}, scientific computing \cite{cambier2016robust,danaila2017computation,kressner2016preconditioned,steinlechner2016riemannian} and semidefinite programming \cite{boumal2016non,journee2010low,tang2023feasible,tang2023solving}. When the manifold constraint $x \in \M$ is dropped (i.e., $\M$ is chosen as the {Euclidean} space $\E$), the optimization problem \eqref{Prob_Rie} has been extensively studied with a great number of efficient algorithms proposed such as conjugate gradient method, Newton method, quasi-Newton method, and trust-region method. Among these approaches, the momentum-accelerated method is a family of simple and efficient algorithm, which is proven to achieve better convergence rate for convex problems \cite{beck2009fast, nesterov1983method}, and  gains popularity in accelerating the stochastic optimization algorithms,  such as SGD \cite{sutskever2013importance} and Adam \cite{kingma2014adam}. In particular Castera et al. \cite{castera2021inertial} propose a class of momentum gradient methods called inertial Newton algorithm (INNA). Their works  demonstrate that the INNA implicitly utilizes the second-order information without evaluating the Hessian of $f$, which explains the high efficiency for these momentum accelerated gradient methods. 

However, with the presence of manifold constraint $x \in \M$ in \eqref{Prob_Rie}, how to develop efficient optimization approaches for \eqref{Prob_Rie} becomes challenging. Following the well-recognized framework proposed by \cite{absil2009optimization}, a great number of unconstrained optimization approaches can be extended to Riemannian manifolds, such as conjugate gradient method \cite{sato2016dai,sato2015new}, Newton method \cite{adler2002newton,hu2018adaptive}, quasi-Newton method \cite{huang2015broyden,kasai2018riemannian} and trust-region method \cite{absil2007trust,boumal2011rtrmc}.
Existing Riemannian momentum accelerated gradient methods are either developed based on the fast iterative shrinkage-thresholding algorithm \cite{huang2022riemannian}, or use the fixed momentum parameter, which follow the  updating scheme below:
\begin{equation}
	x_{k+1} = \Re_{x_k}(-\eta_{k,1}\g f(x_k) + \eta_{k,2}d_k),
\end{equation}
where $\eta_{k,1}$ and $\eta_{k,2}$ are stepsizes for Riemannian gradient  and the momentum, respectively. On the other hand, when the Riemannian Hessian of $f$ is available, one can develop the Riemannian trust-region method \cite{absil2009optimization} based on the unconstrained trust-region method, which iterates by sequentially solving the following trust-region subproblem,
\begin{equation}
	\begin{aligned}
		d_k ={}& \mathop{\arg\min}_{d \in {\rm T}_{x_k}\M, \| d \| \leq \Delta_k} f(x_k) + \<d, \g f(x_k) \> + \frac{1}{2}  \<d, \h f(x_k)[d] \>\\
		x_{k+1} ={}& \Re_{x_k}(d_k)
	\end{aligned}
\end{equation}
where ${\rm T}_{x_k}\M$ denotes the tangent of $\mathcal{M}$ at $x_k$ and 
$\mathcal{R}_{x_k}$ is a retraction mapping defined at $x_k$. 
Here $\Delta_k$ is the so-called trust-region radius that is adaptively updated in the Riemannian trust-region method. However, how to choose the trust-region radius in each iteration is challenging. To this end, Agarwal et al. \cite{agarwal2021adaptive} propose the Riemannian cubic regularization method, where they consider the following subproblem in each iteration:
\begin{equation}
	\begin{aligned}
		d_k ={}& \mathop{\arg\min}_{d \in {\rm T}_{x_k}\M, \| d \| \leq \Delta_k} f(x_k) + \<d, \g f(x_k) \> + \frac{1}{2}  \<d, \h f(x_k)[d] \> + \frac{\gamma_k}{6}\| d \|^3\\
		x_{k+1} ={}& \Re_{x_k}(d_k).
	\end{aligned}
\end{equation}
As illustrated in \cite{cartis2010complexity}, the iteration complexity of Riemannian cubic regularization method is $\mathcal{O}(\varepsilon^{-3/2})$. However, in these approaches, solving the subproblem is costly even in the {Euclidean} setting, as one need to intensively perform the Hessian-vector product, especially in high dimensional cases. 
To alleviate the high computational cost of the trust-region cubic regularization method in the
{Euclidean} setting, 
Zhang et al. \cite{zhang2022drsom} proposed a dimension-reduced second order method (DRSOM), where they employ a   two dimensional trust region subproblem along the current gradient and the momentum direction in each iteration. Therefore, their proposed DRSOM exploits the second order information while maintaining low computational cost to achieve 
highly efficient numerical performance in solving a wide variety of unconstrained optimization problems \cite{zhang2022drsom}. Very recently, this idea has  been extended to adaptive gradient method by Li et al. in \cite{li2022dimension}. 
% However, these optimization approaches either uses the fixed momentum parameters, or should utilize the Hessian information for the subproblem. How to utilize  the second order information while maintaining low computational cost for these approaches remains unexplored. 

In this work, we propose a Riemannian Dimension-Reduced Second Order Method (RDRSOM), which extends the DRSOM from the {Euclidean} setting to the Riemannian setting. In each iteration of our algorithm, we compute the next iterate by solving a cubic-regularized subproblem on a selected subspace of the tangent space at the current iterate. We prove that our algorithm returns an $\varepsilon$-stationary point of \cref{Prob_Rie} within $\O(1/\epsilon^{3/2})$ iterations under certain regularity conditions, which matches the existing results on the worst-case iteration complexity of Riemannian cubic regularized Newton methods \cite{agarwal2021adaptive}. Compared with these existing works that require solving the cubic regularized subproblem in the entire tangent subspace of the current iterate,  our proposed approach only partially exploits second-order information in a subspace of the tangent space, hence enjoys lower  per-iteration computational cost. Moreover, in order to further reduce computational complexity, we design a practical version of RDRSOM, where we employ the finite-difference techniques to approximate the Hessian in the particular subspace. Our proposed RDRSOM algorithm can also be viewed as an extension of the Barzilai-Borwein gradient method, which utilizes a finite difference scheme between the current and previous iterates to approximate the curvature information. We perform extensive numerical experiments to demonstrate that our proposed RDRSOM algorithm outperforms various
state-of-the-art optimization approaches in terms of computation time.

\subsection{Sensor network localization}\label{subsec-SNL}
One important application of our approach in the sensor network localization (SNL)  \cite{biswas2004semidefinite,so2007theory}, which computes the locations of a set of points in $\R^d$ according to their partial pairwise distance measurements. The SNL problem has extensive applicability across diverse fields, such as molecular conformation, dimensionality reduction, and ad hoc wireless sensor networks \cite{alfakih1999solving,biswas2004semidefinite,biswas2006distributed,doherty2001convex,hendrickson1995molecule,savvides2001dynamic,shang2003localization}. One mathematical formulation of this problem can be presented as follows:
\begin{multline}\label{SNL}
\min\Bigg\{ \frac{1}{2} \sum_{(i,j)\in \N}\( \|R_i-R_j\|^2-d_{ij}^2 \)^2-\frac{\lambda}{2n}\sum_{i=1}^n\sum_{j=1}^n \|R_i-R_j\|^2:\\
R\in \R^{n\times d},\ \forall\ (i,k)\in \G,\ \|R_i-a_k\|=d_{ik}\Bigg\},
\end{multline}
where $R_i$ denotes the $i$th row of $R$, $\N\subset \binom{[n]}{2}, \G\subset [n]\times [m],$ $a_k$'s are the positions of the anchors, $d_{ij}$ and $d_{ik}$ are pairwise sensor-sensor and sensor-anchor distance measurements, respectively. Our goal is to determine the locations of sensors $R_i$'s from the incomplete pairwise distance information. The regularization term  with $\lambda,$ which has been used before \cite{biswas2006semidefinite1,biswas2006semidefinite}, is added to prevent the predicted points from crowding together. We fix the distance between the sensors and the anchors by assuming that the sensor-anchor distance measurement is accurate while the sensor-sensor distance measurements may contain noise. This formulation has also been considered in \cite{li2018qsdpnal} by Li et al. One advantage of adding these constraints is that the feasible set is compact when all sensors are directly connected to some anchors.

Because of the non-convexity of problem \cref{SNL}, semidefinite programming relaxation
is widely used in the literature of (SNL) \cite{biswas2006semidefinite1,biswas2006semidefinite,biswas2004semidefinite,so2007theory}. The semidefinite relaxation of \cref{SNL} is the following quadratic SDP problem: \cite{li2018qsdpnal}
\begin{eqnarray}\label{SNLSDP}
&&\min\Bigg\{ \frac{1}{2}\sum_{(i,j)\in \N}\( g_{ij}^\top X g_{ij}-d_{ij}^2 \)^2
-\lambda\<\hat{I}-aa^\top/n,X \>
\\
&& \hspace{2cm} g_{ik}^\top X g_{ik}=d_{ik}^2\;\; \forall (i,j)\in \mathcal{G},\ 
 X=\begin{pmatrix}I_d&*\\ *&*\end{pmatrix}\in \S^{n+d}_+ \Bigg\}, \nonumber
\end{eqnarray}
where $\hat{I}:=\dd([0_{d};e]),$ $a: = [0_{d};e],$ $g_{ij}:=[0_d;e_i-e_j]$ and $g_{ik}:=[-a_k;e_i].$ Although the effectiveness of semidefinite programming has been demonstrated in numerous numerical experiments, this formulation is unscalable because of its large dimensionality
of  $\O(n^2).$ One way to reduce the dimension of \cref{SNLSDP} is to use 
its low rank decomposition, which is also known as Burer and Monteiro factorization \cite{burer2003nonlinear,burer2005local}, as described below: 
\begin{multline}\label{SNL1}
\min\Bigg\{ \frac{1}{2} \sum_{(i,j)\in \N}\( \|\hat{R}_i-\hat{R}_j\|^2-d_{ij}^2 \)^2-\frac{\lambda}{2n}\sum_{i=1}^n\sum_{j=1}^n \|\hat{R}_i-\hat{R}_j\|^2:\\
\hat{R}\in \R^{n\times r},\ \forall\ (i,k)\in \G,\ \|\hat{R}_i-\hat{a}_k\|=d_{ik}\Bigg\},
\end{multline}
where $r>d$ and $\hat{a}_k=[a_k,0_{1\times (r-d)}].$ Problem \cref{SNL1} is similar to \cref{SNL} and they are equivalent if $r=d.$ Despite the nonconvex nature of problem \cref{SNL1}, it has been shown by Boumal et al. that the non-convexity of low rank SDP problem is benign and one can use local search methods to 
find its global optimal solution provided that $r$ is larger than some rank bound \cite{Boumal1,boumal2016non}. 

In Section~\ref{sec-SNL}, we will apply (RDRSOM) to solve \cref{SNL} and \cref{SNL1}. In the literature of Riemannian optimization, the manifolds used in real applications are mostly well-known manifolds\footnote{Readers may read chapter two and seven in the book \cite{boumal2020introduction} to know several commonly used manifolds in Riemannian optimization.} such as Stiefel manifold ${\rm St}_{n,k}:=\big\{ X\in \R^{n\times k}:\ X^\top X=I_k \big\},$ oblique manifold ${\rm OB}_{n,r}:=\big\{ R\in \R^{n\times r}:\ \dd(RR^\top)=e \big\}$ and fixed-rank matrix manifold $\R_r^{m\times n}:=\left\{ X\in \R^{m\times n}:\ \rr(X)=r\right\}.$ These manifolds have simple well-studied geometric structures. As for problem \cref{SNL} and \cref{SNL1}, the constraints indicate that the distances between a sensor and several anchors are fixed. Thus, such a sensor lies on the intersection of several spheres with different radiuses and centers. While manifolds with spherical structure frequently appears in Riemannian optimization, as far as we know, the intersection of different spheres hasn't been considered in the literature before. Actually, it is not obvious whether the feasible set of \cref{SNL} is indeed a manifold. We will prove that the feasible set of \cref{SNL} is a Riemannian manifold even if the linear independence constraint qualification (LICQ) does not hold. This allows us to use our Riemannian optimization algorithm to solve \cref{SNL} efficiently.

%%%%%%%%%%%%%%%%%%%%Organization%%%%%%%%%%%%%%%%%%%%%%%%%%%%%%%%%%%%%%%%%%%%%%%%%%%%%%%%%%%%%%%%%%%%%%%%%%%%%%%%%%%%%%%%%%%%%%%%%%%%%%%%%%%%%
\subsection{Organization}
The rest of this paper is organized as follows. In Subsection~\ref{subsec-notation}, we 
present some notations that are frequently used throughout the paper. In Section~\ref{sec-RDRSOM}, we develop a cubic-regularized Riemannian optimization method and conduct its convergence analysis. In Section~\ref{sec-SNL}, we prove that the feasible set of \cref{SNL} is a Riemannian manifold. In Section~\ref{sec-exp}, we perform numerical experiments to demonstrate the efficiency of our algorithm.

%%%%%%%%%%%%%%%%%%%Notation%%%%%%%%%%%%%%%%%%%%%%%%%%%%%%%%%%%%%%%%%%%%%%%%%%%%%%%%%%%%%%%%%%%%%%%%%%%%%%%%%%%%%%%%%%%%%%%%%%%%%%%%%%%%%%%%
\subsection{Notations}\label{subsec-notation}
Through out this paper, $I_k$ denotes $k\times k$ identity matrix, $e$ denotes a vector of all ones. We omit the dimension if it is already clear from the context. For any matrix $A\in \R^{n\times m}$ and any $i\in [n],$ $A_i\in \R^{1\times m}$ denotes the $i$th row of $A.$ 
We use $\<A,B\>:={\rm tr}\(AB^\top\)$ to denote the matrix inner product and $\|\cdot\|$ to denote the Frobenius norm.

%%%%%%%%%%%%%%%%%%%%%%%%%RDRSOM%%%%%%%%%%%%%%%%%%%%%%%%%%%%%%%%%%%%%%%%%%%%%%%%%%%%%%%%%%%%%%%%%%%%%%%%%%%%%%%%%%%%%%%%%%%%%%%%%%%%%%%%%
\section{The extension of DRSOM on manifold}\label{sec-RDRSOM}
%%%%%%%%%%%%%%%%%%%%%%Preliminaries%%%%%%%%%%%%%%%%%%%%%%%%%%%%%%%%%%%%%%%%%%%%%%%%%%%%%%%%%%%%%%%%%%%%%%%%%%%%%%%%%%%%%%%%%%%%%%%%%%%%%%%%%%%%%
\subsection{Preliminaries of Riemannian optimization}
We first recall some basic properties of Riemannian optimization\footnote{For text books on Riemannian optimization, please refer to \cite{absil2009optimization,boumal2020introduction}.}. For each point $x\in \M,$ there exists a tangent space $\T_x\M$ which can be viewed as a linearization of $\M$ at $x.$ If the manifold is given by $\M:=\{ x\in \E:\ c_i(x)=0,\ \forall i\in [m] \},$ where the LICQ property holds, then the tangent space is $\T_x\M:=\{h\in \E:\ \<\nabla c_i(x),h\>=0,\ \forall i\in [m] \}.$ The projection mapping $\P_x:\E\rightarrow \T_x\M$ is defined as the metric projection of any vector in $\E$ onto $\T_x\M.$ The Riemannian gradient $\g f(x)\in \T_x\M$ is defined as $\P_x \nabla f(x) .$ The Riemannian Hessian $\h f(x):\T_x\M\rightarrow \T_x\M$ is defined as 
%$\h f(x) [h]:=\P_x \D \(x\rightarrow \g f(x)\)(x)[h]$ for any $h\in \T_x\M.$ 
$\h f(x) [h]:=\P_x \(\D \g f(x) [h]\)$ for any $h\in \T_x\M.$
It is the differential (pushforward) of the Riemannian gradient on $\T_x \M.$ For any $x\in \M,$ the retraction mapping $\Re_x:\T_x \M\rightarrow \M$ satisfies that $\Re_x(0)=x$ and ${\rm D}\Re_x(0)$ is the identity map. For any $x,y\in \M$ and $u\in \T_x\M,$ $\T_{y\leftarrow x}u$ is the vector transport which is the projection of $u\in \T_x\M$ on the tangent space $\T_y\M.$ 

%A vector transport $\Gamma: \T\M\oplus\T\M \to \T\M:(\eta_x, \xi_k) \to \Gamma_{\eta_x}\xi_x$, associated with a retraction $\R$ is a smooth mapping such that for any $x \in \M$ and $\eta_x, \xi_x \in \T_x\M$, it holds that $\Gamma_{\eta_x}(\xi_x)\in \T_{\R_x(\eta_x)}\M$ and $\Gamma_{\eta_x}$ is a linear mapping for any fixed $\eta_x$. 

%%%%%%%%%%%%%%%%%%%%%%RDRSOM%%%%%%%%%%%%%%%%%%%%%%%%%%%%%%%%%%%%%%%%%%%%%%%%%%%%%%%%%%%%%%%%%%%%%%%%%%%%%%%%%%%%%%%%%%%%%%%%%%%%%%%%%%%%
\subsection{Riemannian DRSOM}

With the basic knowledge of Riemannian optimization, we are now able to state our algorithm
in Algorithm \ref{alg1}.
\begin{algorithm}
\caption{RDRSOM}
\begin{algorithmic}\label{alg1}
\STATE{{\bf Input:} $x_0\in \M,$ $\epsilon>0$}
\STATE{{\bf Initialization:} $k\gets 0$ }
\WHILE{$\|\g f(x_k)\|\geq \epsilon$}
\STATE{Choose a subspace $\V_k\subset \T_{x_k}\M$ and a regularization parameter $\gamma_k>0$}
\STATE{$d_k\gets \arg\min\left\{ f(x_k)+\<\g f(x_k),d\>+\frac{1}{2}\< \h f(x_k)[d],d \>+\frac{\gamma_k}{6}\| d \|^3:\ d\in \V_k \right\}$}
\STATE{$x_{k+1}\gets \Re_{x_k}(d_k)$}
\STATE{$k\gets k+1$}
\ENDWHILE
\end{algorithmic}
\end{algorithm}

In every iteration of Algorithm~\ref{alg1}, the next iterate is computed by solving the cubic regularized Newton subproblem restricted  in an adaptively selected subspace $\V_k$ of $\T_{x_k}\M$.  When we choose $\V_k=\T_x\M,$,  Algorithm~\ref{alg1} coincides with the  Riemannian cubic regularized Newton method. On the other hand, when we choose $\V_k$ as the two-dimensional subspace spanned by the directions of the gradient and the updating directions of the last iterate, i.e., 
$$\V_k:={\rm span}\(\g f(x_k), \P_{x_k}(x_k-x_{k-1})\),$$
 then Algorithm~\ref{alg1} becomes the extension of DRSOM on manifold with the difference that we use cubic regularization to control the step size instead of trust region. Before we conduct the convergence analysis of Algorithm~\ref{alg1}, we need the following assumption.

\begin{assumption}\label{LipSec}
	For Algorithm \ref{alg1}, we assume that there exists $M>0$ such that the following conditions hold, 
\begin{multline}\label{AS1}
\left|f(\Re_{x_k}(d_k))-\left(f(x_k)+\<d_k,\g f(x_k)\>+\frac{1}{2}\<d_k,\h f(x_k)[d_k]\>\right)\right|\\
\leq \frac{M}{6}\|d_k\|^3.
\end{multline}
\begin{equation}\label{AS2}
\left\| \g f(\Re_{x_k}(d_k))-\g f(x_k)-\h f(x_k)[d_k] \right\|\leq \frac{M}{2}\|d_k\|^2.
\end{equation}
\begin{equation}\label{AS3}
\left\| \P_{\V_k}\( \h f(x_k)[d_k]\) -\h f(x_k)[d_k] \right\|\leq \frac{M}{2}\|d_k\|^2,
\end{equation}
where $\P_{\V_k}(\cdot)$ is the orthogonal projection mapping of the linear space $\V_k.$
\end{assumption}
In Assumption~\ref{LipSec}, \cref{AS1} and \cref{AS2} are common assumptions in existing works on  the convergence properties of Riemannian trust region method \cite{boumal2019global}. Assumption~\eqref{AS3} imposes a regularity condition on $\P_{\V_k}\( \h f(x_k)\)$ in the sense that it approximates $\h f(x_k)$ along the direction $d_k$. Such a condition also appears in \cite{zhang2022drsom}. While \eqref{AS3} is a standard assumption in cubic regularized Newton methods \cite{agarwal2021adaptive,cartis2010complexity}, it could be restrictive when $\V_k$ is a lower dimensional space. We will discuss how to avoid this assumption in the next section.

The following lemma shows the relationship between $\| \g f(x_{k+1}) \|$ and $\| d_k \|^2$ under Assumption \ref{LipSec}. 

\begin{lemma}\label{lemgrad}
Suppose Assumption~\ref{LipSec} holds and $\g f(x_k)\in \V_k$, then 
\begin{equation}
\| \g f(x_{k+1}) \| \leq \frac{2M + \gamma_k }{2}  \| d_k \|^2. 
\end{equation}
\end{lemma}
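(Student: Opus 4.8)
The plan is to exploit the first-order optimality condition of the cubic-regularized subproblem restricted to the linear subspace $\V_k$, and then transfer this information to $\g f(x_{k+1})$ via \cref{AS2}. First I would write the model as $m_k(d) = f(x_k)+\<\g f(x_k),d\>+\frac{1}{2}\<\h f(x_k)[d],d\>+\frac{\gamma_k}{6}\|d\|^3$ and compute its Euclidean gradient on $\T_{x_k}\M$, namely $\g f(x_k) + \h f(x_k)[d] + \frac{\gamma_k}{2}\|d\|\,d$, where the factor $\frac{\gamma_k}{2}\|d\|\,d$ comes from differentiating $\|d\|^3$. Since $d_k$ minimizes $m_k$ over the \emph{linear} subspace $\V_k$, the stationarity condition is that the $\V_k$-projection of this gradient vanishes. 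Using the hypothesis $\g f(x_k)\in\V_k$ (so $\P_{\V_k}\g f(x_k)=\g f(x_k)$) together with $d_k\in\V_k$ (so $\P_{\V_k}d_k=d_k$), this collapses to the key identity $\g f(x_k) = -\P_{\V_k}(\h f(x_k)[d_k]) - \frac{\gamma_k}{2}\|d_k\|\,d_k$.

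Next I would invoke \cref{AS2}, which gives $\g f(x_{k+1}) = \g f(x_k) + \h f(x_k)[d_k] + E_1$ with $\|E_1\|\le \frac{M}{2}\|d_k\|^2$, and substitute the identity above for $\g f(x_k)$. After cancellation the $\g f(x_k)+\h f(x_k)[d_k]$ contribution becomes $\big(\h f(x_k)[d_k]-\P_{\V_k}(\h f(x_k)[d_k])\big) - \frac{\gamma_k}{2}\|d_k\|\,d_k$. The first bracket is exactly the quantity controlled by \cref{AS3}, so its norm is at most $\frac{M}{2}\|d_k\|^2$; the middle term has norm $\frac{\gamma_k}{2}\|d_k\|^2$ since $\|d_k\|\,\|d_k\|=\|d_k\|^2$; and $E_1$ contributes $\frac{M}{2}\|d_k\|^2$. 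A single application of the triangle inequality then yields $\|\g f(x_{k+1})\| \le (\frac{M}{2}+\frac{\gamma_k}{2}+\frac{M}{2})\|d_k\|^2 = \frac{2M+\gamma_k}{2}\|d_k\|^2$, which is the claim.

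I expect the only delicate point to be the derivation of the restricted optimality condition: one must differentiate the cubic term correctly and, crucially, use that $\V_k$ is a subspace so that stationarity is a \emph{projected} gradient equation rather than a full gradient equation. The hypothesis $\g f(x_k)\in\V_k$ is precisely what keeps $\g f(x_k)$ intact under $\P_{\V_k}$; without it the projection would truncate the gradient and the telescoping cancellation would fail. Note that \cref{AS1} plays no role in this estimate — only \cref{AS2} and \cref{AS3} are used — so no subtlety concerning the function-value decrease enters here.
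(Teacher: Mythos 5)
Your proof is correct and follows essentially the same route as the paper: both derive the projected first-order optimality condition $\g f(x_k)+\P_{\V_k}\(\h f(x_k)[d_k]\)+\frac{\gamma_k}{2}\|d_k\|d_k=0$ (using $\g f(x_k)\in\V_k$ and $d_k\in\V_k$), control the projection error via \cref{AS3}, and pass to $\g f(x_{k+1})$ via \cref{AS2} and the triangle inequality. The only difference is cosmetic bookkeeping --- the paper first bounds $\|\g f(x_k)+\h f(x_k)[d_k]\|$ and then adds the \cref{AS2} error, while you substitute the stationarity identity into the \cref{AS2} expansion before bounding all three terms at once.
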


\begin{proof}
The first order optimality condition of the subproblem of Algorithm~\ref{alg1} implies that
\begin{equation}\label{Eq_Lem1_xnc}
\g f(x_k)+\P_{\V_k}\( \h f(x_k)[d_k] \)+\frac{\gamma_k\|d_k\|}{2}d_k=0,
\end{equation}
which implies that 
\begin{align}
\| \g f(x_k)+\h f(x_k)[d_k] \|
\leq{}& \| \g f(x_k)+\P_{\V_k}\( \h f(x_k)[d_k] \) \| \notag\\
&+\| \h f(x_k)[d_k]-\P_{\V_k}\( \h f(x_k)[d_k] \) \|\notag\\
\leq{}& \frac{\gamma_k+M}{2}\|d_k\|^2,\label{Jan_19_1}
\end{align}
where the second inequality comes from  \cref{AS3} and \cref{Eq_Lem1_xnc}. Combining \cref{AS2} and \cref{Jan_19_1}, we get the required result.
%%
%\begin{equation}\label{Jan_19_2}
%\| \g f(x_{k+1}) \|\leq \frac{2M+\gamma_k}{2}\|d_k\|^2.
%\end{equation}
\end{proof}

\begin{lemma}\label{lemprod}
Suppose Assumption~\ref{LipSec} holds, $\g f(x_k)\in \V_k$ and $\gamma_k\geq M,$ then it holds that 
\begin{equation}
\< \g f(x_k),d_k \>\leq 0.
\end{equation}
\end{lemma}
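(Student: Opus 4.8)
The plan is to exploit the fact that $d_k$ is a \emph{global} minimizer of the cubic model
\[
m(d) := \<\g f(x_k),d\> + \tfrac{1}{2}\<\h f(x_k)[d],d\> + \tfrac{\gamma_k}{6}\|d\|^3
\]
over the \emph{linear subspace} $\V_k$, combined with the observation that the nonlinear part of $m$ is even in $d$. First I would note that, since $\V_k$ is a subspace, $d_k\in\V_k$ forces $-d_k\in\V_k$, so the minimality of $d_k$ immediately gives $m(d_k)\le m(-d_k)$.

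Next I would compare the two model values term by term. The quadratic term is invariant under $d\mapsto -d$ because $\h f(x_k)$ is a linear operator, so $\<\h f(x_k)[-d_k],-d_k\> = \<\h f(x_k)[d_k],d_k\>$; the cubic term is invariant because it depends only on $\|d_k\|=\|-d_k\|$. Hence only the linear term changes sign, and the inequality $m(d_k)\le m(-d_k)$ collapses to
\[
\<\g f(x_k),d_k\> \le -\<\g f(x_k),d_k\>,
\]
which yields $2\<\g f(x_k),d_k\>\le 0$ and hence the claim.

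I do not expect a genuine obstacle here: the argument requires only that $\V_k$ be a subspace and that $d_k$ be a global minimizer of $m$ over $\V_k$, the latter being guaranteed since the cubic term makes $m$ coercive whenever $\gamma_k>0$. It is worth flagging that neither Assumption~\ref{LipSec} nor the hypotheses $\g f(x_k)\in\V_k$ and $\gamma_k\ge M$ are strictly needed for this particular inequality; they appear to be inherited from the surrounding analysis and retained for uniformity with the adjacent lemmas. An alternative derivation starting from the first-order optimality condition \eqref{Eq_Lem1_xnc}, by taking the inner product with $d_k$, would reduce the claim to showing $\<\h f(x_k)[d_k],d_k\>+\tfrac{\gamma_k}{2}\|d_k\|^3\ge 0$; however, the assumptions do not supply a lower bound on $\<\h f(x_k)[d_k],d_k\>$, so I would favor the symmetry argument as the cleaner and self-contained route.
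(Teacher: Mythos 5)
Your proof is correct, and it takes a genuinely different route from the paper's. You compare the model values at $d_k$ and $-d_k$: since $\V_k$ is a subspace and $d_k$ is a global minimizer of the coercive cubic model over $\V_k$, the even (quadratic and cubic) terms cancel in $m(d_k)\le m(-d_k)$ and the linear term gives $2\<\g f(x_k),d_k\>\le 0$. The paper instead follows exactly the ``alternative derivation'' you set aside: it takes the inner product of the first-order condition \eqref{Eq_Lem1_xnc} with $d_k$ to obtain \eqref{Jan_19_4}, and then supplies the lower bound you judged unavailable, namely $\<\h f(x_k)[d_k],d_k\>+\tfrac{\gamma_k}{2}\|d_k\|^3\ge 0$, as its equation \eqref{Jan_19_3}, citing second-order optimality of the subproblem. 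That bound is in fact available: it is the Nesterov--Polyak-type characterization of \emph{global} minimizers of cubic-regularized models (note the factor $\gamma_k/2$; the naive local second-order necessary condition would only give the factor $\gamma_k$, which is too weak to close the argument). So both proofs ultimately lean on global optimality of $d_k$, but yours does so in a more elementary and self-contained way, and it makes transparent your correct observation that Assumption~\ref{LipSec}, $\g f(x_k)\in\V_k$, and $\gamma_k\ge M$ are not needed for this particular inequality (the paper's version does use $\g f(x_k)\in\V_k$ implicitly, since \eqref{Eq_Lem1_xnc} writes $\g f(x_k)$ rather than its projection onto $\V_k$). What the paper's heavier route buys is reuse: the identity \eqref{Jan_19_4} is fed directly into the decrease estimate \eqref{Jan_19_6} in the proof of Theorem~\ref{thm-conv}, so its bookkeeping pays off downstream, whereas your argument, while cleaner here, would still leave those identities to be derived for the main theorem.
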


\begin{proof}
From the second order optimality of the subproblem of Algorithm~\ref{alg1}, we have that
\begin{equation}\label{Jan_19_3}
\< \h f(x_k)[d_k],d_k \>+\frac{\gamma_k }{2}\|d_k\|^3\geq 0.
\end{equation}
From \cref{Eq_Lem1_xnc}, we have that
\begin{equation}\label{Jan_19_4}
\<\g f(x_k),d_k\>+\< \h f(x_k)[d_k],d_k \>+\frac{\gamma_k }{2}\|d_k\|^3=0.
\end{equation}
Combining \cref{Jan_19_3} and \cref{Jan_19_4}, we get $\< \g f(x_k),d_k \>\leq 0.$
\end{proof}

Now we state our main convergence theorem.

\begin{theorem}\label{thm-conv}
Suppose Assumption~\ref{LipSec} holds, and for any $k\in \mathbb{N},$ $\g f(x_k)\in \V_k$ and $C>\gamma_k\geq M.$ Moreover, suppose that the optimal value of problem \cref{Prob_Rie} is $f^*>-\infty.$ Then the sequence generated by Algorithm~\ref{alg1} satisfies
\begin{equation}\label{eq:conv}
\min_{0 \leq k \leq N+1} \| \g f(x_{k})  \| \leq \frac{12^{2/3}(2M+C)}{2M^{2/3}} \left( \frac{f(x_0) - f^*}{N} \right)^{2/3}. 
\end{equation}
Consequently, Algorithm \ref{alg1} will terminate within $\O(1/\epsilon^{3/2})$ iterations.
\end{theorem}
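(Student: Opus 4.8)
The plan is to establish a per-iteration sufficient-decrease inequality of the form $f(x_k) - f(x_{k+1}) \geq c\,\|d_k\|^3$ for a constant $c>0$, telescope it across iterations to bound $\sum_k \|d_k\|^3$ by the finite quantity $f(x_0) - f^*$, and then invoke \cref{lemgrad} to convert a smallness bound on some $\|d_{k^*}\|$ into the desired smallness bound on $\|\g f(x_{k^*+1})\|$. The final complexity statement follows by inverting the resulting rate.

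First I would derive the sufficient decrease. Starting from \cref{AS1}, the value $f(x_{k+1}) = f(\Re_{x_k}(d_k))$ is bounded by the quadratic model at $d_k$ plus $\frac{M}{6}\|d_k\|^3$. To evaluate the model, I would reuse the identity \cref{Jan_19_4}, obtained by pairing the optimality condition \cref{Eq_Lem1_xnc} with $d_k \in \V_k$, which gives $\<\g f(x_k), d_k\> + \<\h f(x_k)[d_k], d_k\> = -\frac{\gamma_k}{2}\|d_k\|^3$. Decomposing the model's relevant part as $\<\g f(x_k), d_k\> + \frac{1}{2}\<\h f(x_k)[d_k], d_k\> = \frac{1}{2}\big(\<\g f(x_k), d_k\> + \<\h f(x_k)[d_k], d_k\>\big) + \frac{1}{2}\<\g f(x_k), d_k\>$ and combining \cref{Jan_19_4} with the sign condition $\<\g f(x_k), d_k\> \leq 0$ from \cref{lemprod}, this part is at most $-\frac{\gamma_k}{4}\|d_k\|^3$. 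Together with the $\frac{M}{6}\|d_k\|^3$ error and $\gamma_k \geq M$, I expect $f(x_k) - f(x_{k+1}) \geq \big(\frac{\gamma_k}{4} - \frac{M}{6}\big)\|d_k\|^3 \geq \frac{M}{12}\|d_k\|^3$.

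Summing this bound over $k = 0,\dots,N$ and using $f(x_{N+1}) \geq f^*$ yields $\sum_{k=0}^{N} \|d_k\|^3 \leq \frac{12(f(x_0)-f^*)}{M}$, so by an averaging (pigeonhole) argument some index $k^* \leq N$ satisfies $\|d_{k^*}\|^3 \leq \frac{12(f(x_0)-f^*)}{MN}$. Applying \cref{lemgrad} with $\gamma_{k^*} < C$ then gives $\|\g f(x_{k^*+1})\| \leq \frac{2M+C}{2}\|d_{k^*}\|^2 \leq \frac{2M+C}{2}\big(\frac{12(f(x_0)-f^*)}{MN}\big)^{2/3}$, which is exactly \cref{eq:conv} since $k^*+1 \leq N+1$; demanding the right-hand side be at most $\epsilon$ and solving for $N$ gives $N = \O(\epsilon^{-3/2})$. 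I expect the only delicate point to be the bookkeeping in the sufficient-decrease step, specifically the algebraic rearrangement that uses \cref{lemprod} to bound the model term by $-\frac{\gamma_k}{4}\|d_k\|^3$; everything afterward is a routine telescoping-and-averaging argument, and the $N$ versus $N+1$ discrepancy is harmless since replacing $N+1$ by $N$ in the denominator only loosens the stated bound.
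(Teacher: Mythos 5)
Your proposal is correct and follows essentially the same route as the paper's proof: the identical sufficient-decrease bound $f(x_k)-f(x_{k+1})\geq \frac{M}{12}\|d_k\|^3$ derived from \cref{AS1}, \cref{Jan_19_4}, and \cref{lemprod}, then telescoping and \cref{lemgrad} to reach \cref{eq:conv} with the same constant. The only (cosmetic) difference is ordering: the paper converts $\|d_k\|^3$ to $\|\g f(x_{k+1})\|^{3/2}$ via \cref{lemgrad} in every iteration and then sums, whereas you sum $\|d_k\|^3$ first, pick the pigeonhole index $k^*$, and apply \cref{lemgrad} once --- both yield the same bound.
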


\begin{proof}
Let $m_k(d):=f(x_k)+\<\g f(x_k),d\>+\frac{1}{2}\< \h f(x_k)[d],d \>.$ Then it holds that
\begin{equation}\label{Jan_19_5}
f(x_k) - m_k(d_k) = -\<\g f(x_k), d_k  \> - \frac{1}{2} \<\h f(x_k)[d_k], d_k  \> .
\end{equation}
Combining \cref{Jan_19_4} and \cref{Jan_19_5} we obtain that
\begin{equation}\label{Jan_19_6}
f(x_k) - m_k(d_k) = -\frac{1}{2}\<\g f(x_k), d_k  \> +\frac{\gamma_k}{4} \| d_k \|^3\geq \frac{\gamma_k}{4} \| d_k \|^3,
\end{equation}
where the inequality comes from Lemma~\ref{lemprod}. From \cref{AS1}, \cref{Jan_19_6} and $\gamma_k\geq M$, we get
\begin{equation}\label{Jan_19_7}
f(x_k) - f(x_{k+1}) \geq f(x_k)-m_k(d_k)-|f(x_{k+1})-m_k(d_k)|\geq \frac{M}{12}\|d_k\|^3.
\end{equation}
From Lemma~\ref{lemgrad} and \cref{Jan_19_7}, we immediately get
\begin{equation}\label{Jan_19_8}
f(x_k) - f(x_{k+1}) \geq \frac{2^{3/2}M}{12(2M+C)^{3/2}}\| \g f(x_{k+1})\|^{3/2}.
\end{equation}
Therefore, we get
\begin{equation}\label{Jan_19_9}
f(x_0) - f^* \geq \sum_{k=0}^N\frac{2^{3/2}M}{12(2M+C)^{3/2}}\| \g f(x_{k+1})\|^{3/2}.
\end{equation}
From here, the required result follows.
\end{proof}

%Theorem~\ref{thm-conv} has the following immediate corollary.
%
%\begin{corollary}\label{coro-conv}
%Suppose Assumption~\ref{LipSec} holds and $\infty>C>\gamma_k\geq 4M.$ Moreover, suppose that the optimal value of problem \cref{Prob_Rie} is $f^*>-\infty.$ Then for any $\epsilon>0,$ Algorithm~\ref{alg1} will terminate within $O(1/\epsilon^{3/2})$ iterations.
%\end{corollary}
%\begin{proof}
%Using \cref{eq:conv} and we get the complexity bound $O(1/\epsilon^{3/2}).$
%\end{proof}

%%%%%%%%%%%%%%%%%%%%%%Practical DRSOM%%%%%%%%%%%%%%%%%%%%%%%%%%%%%%%%%%%%%%%%%%%%%%%%%%%%%%%%%%%%%%%%%%%%%%%%%%%%%%%%%%%%%%%%%%%%%%%%%%%%%%%%%%%%%%%%

\subsection{Practical RDRSOM}
The convergence of Algorithm~\ref{alg1} is based on Assumption~\ref{LipSec}. Although conditions \eqref{AS1} and \eqref{AS2} are relatively mild in practice and commonly appear in \cite{cartis2010complexity,zhang2022drsom},  condition \ref{AS3} may not hold if the subspace $\V_k$ is not large enough. To satisfy condition \eqref{AS3}, we can progressively increase the dimension of $\V_k$. However, in practice, to maintain low computational cost, it is usually better to choose a smaller dimensional subspace $\V_k$ of $\E$  by trading off the iteration complexity. Let $V_k$ be a matrix with the columns forming an orthogonal basis of $\V_k$. Then the subproblem can then be reformulated as follows:
\begin{multline}\label{rprob}
\min\Big\{ f(x_k)+ \< V_k^\top \g f(x_k),h \>+\frac{1}{2}\< V_k^\top \h f(x_k)[V_k h],h \>+\frac{\gamma_k}{6}\|h\|^3:\\ h\in \R^{{\rm dim}\V_k} \Big\}.
\end{multline}
Since the size of problem \cref{rprob} is small, it can be efficiently solved by an iterative solver such as the trust-region method or the method described in Section 5.1 of \cite{nesterov2006cubic}. The main cost of solving problem \cref{rprob} is  in computing the dimension-reduced Hessian $Q_k:=V_k^\top \h f(x_k)[V_k \cdot]$, which only requires matrix-vector multiplications.  Let $h_k^i:=\P_{x_k}\(x_k-x_{k-i}\)$ and $g_k:=\g f(x_k).$ When we choose $\V_k$ as
\begin{equation}\label{eq:subspace}
\V_k^r:={\rm span}\( h_k^1,h_k^2,\ldots,h_k^r,g_k \),
\end{equation}
 we can use the finite differences
\begin{equation}\label{eq-app}
 \h f(x_k)[h_k^i]\approx g_k-g_{k-i},\ \h f(x_k) [g_k]\approx\frac{ \g f(\Re_{x_k}(\eta g_k))-g_k}{\eta\|g_k\|}
 \end{equation}
 to approximate the Hessian-vector multiplications. Here $\eta>0$ is the parameter for the 
 last finite-difference.   It is worth mentioning  that the finite difference scheme for approximating $\h f(x_k) [g_k]$ in \cref{eq-app} requires additional computational costs for computing the Riemannian gradient of $f$ at $\Re_{x_k}(\eta g_k)$. Hence there is a trade-off between the per-iteration computational cost and the overall number of iterations for RDRSOM. Furthermore, since choosing $\V_k$ by \eqref{eq-app} may not guarantee that condition~\eqref{AS3} holds, we prove a weaker complexity bound of $\O(1/\epsilon^2)$ in the following theorem, which is sharp for the gradient descent and Newton method \cite{boumal2019global,cartis2010complexity}.

\begin{theorem}\label{conv2}
Suppose for any $k\in \mathbb{N},$ $\gamma\geq \gamma_k\geq M,$ $M I \succeq V_k^\top \h f(x_k)[V_k\cdot],$ $\g f(x_k)\in \V_k$ and condition \eqref{AS1} holds. Moreover, suppose that the optimal value of problem \cref{Prob_Rie} is $f^*>-\infty.$ Then the sequence generated by Algorithm~\ref{alg1} satisfies that

\begin{equation}\label{eq:conv2}
\min_{0\leq k\leq N}\| \g f(x_k)\| =\O(1/\sqrt{N}).
\end{equation}
Then for any $\epsilon>0,$ Algorithm~\ref{alg1} will terminate within $\O(1/\epsilon^{2})$ iterations.
\end{theorem}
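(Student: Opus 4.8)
The plan is to mirror the proof of \cref{thm-conv} but with the weaker descent guarantee that follows when \cref{AS3} is dropped. The key observation is that \cref{AS3} was only used in \cref{lemgrad} to relate $\| \g f(x_{k+1}) \|$ to $\| d_k \|^2$; without it, I can no longer bound the gradient at the next iterate in terms of $\| d_k \|^2$, so instead I will bound the descent $f(x_k) - f(x_{k+1})$ directly in terms of $\| \g f(x_k) \|$, the gradient at the \emph{current} iterate. This shift from controlling $\| \g f(x_{k+1}) \|$ to $\| \g f(x_k) \|$ is exactly why the rate degrades from $\O(1/\epsilon^{3/2})$ to $\O(1/\epsilon^2)$.

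First I would establish a lower bound on the per-step decrease. Since $\gamma_k \geq M$, condition \cref{AS1} and an argument identical to \cref{Jan_19_7} give $f(x_k) - f(x_{k+1}) \geq \frac{M}{12} \| d_k \|^3$ (this only uses \cref{AS1}, \cref{lemprod}, and $\gamma_k \geq M$, all of which are available under the present hypotheses). So it suffices to lower bound $\| d_k \|$ in terms of $\| \g f(x_k) \|$. Here the hypothesis $M I \succeq V_k^\top \h f(x_k)[V_k \cdot]$ enters: the reduced Hessian $Q_k$ has bounded spectrum from above, so in the reduced subproblem \cref{rprob} the model gradient is $V_k^\top \g f(x_k) + Q_k h + \frac{\gamma_k \| h \|}{2} h$, which vanishes at the minimizer $h_k$ (with $d_k = V_k h_k$). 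Since $\g f(x_k) \in \V_k$, we have $\| V_k^\top \g f(x_k) \| = \| \g f(x_k) \|$, so from the optimality condition
\begin{equation}
\| \g f(x_k) \| = \left\| Q_k h_k + \frac{\gamma_k \| h_k \|}{2} h_k \right\| \leq M \| h_k \| + \frac{\gamma_k}{2} \| h_k \|^2 = M \| d_k \| + \frac{\gamma_k}{2} \| d_k \|^2.
\end{equation}
This gives a quadratic inequality in $\| d_k \|$ that I can invert to obtain $\| d_k \| \geq c \min\{ \| \g f(x_k) \|, \| \g f(x_k) \|^{1/2} \}$ for a constant $c$ depending on $M, \gamma$.

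Combining the two bounds yields $f(x_k) - f(x_{k+1}) \gtrsim \min\{ \| \g f(x_k) \|^3, \| \g f(x_k) \|^{3/2} \}$. Before termination we have $\| \g f(x_k) \| \geq \epsilon$, and in the relevant regime $\epsilon \leq 1$ the binding term is the $\| \g f(x_k) \|^{3/2}$ one; alternatively, since at the minimum $\min_{0 \leq k \leq N} \| \g f(x_k) \|$ we only need the asymptotic rate, the slower of the two powers governs. Telescoping $f(x_0) - f^* \geq \sum_{k=0}^{N} (f(x_k) - f(x_{k+1}))$ and using that each summand is at least a constant times $(\min_k \| \g f(x_k) \|)^{3/2}$ gives $(N+1) (\min_k \| \g f(x_k) \|)^{3/2} \lesssim f(x_0) - f^*$, hence $\min_k \| \g f(x_k) \| = \O(N^{-2/3})$.

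The main obstacle I anticipate is reconciling the two regimes in the inverted quadratic bound and confirming that it is genuinely the $\| d_k \| \gtrsim \| \g f(x_k) \|^{1/2}$ branch that produces the claimed $\O(1/\sqrt{N})$ rate in \cref{eq:conv2}, rather than the sharper $\O(N^{-2/3})$ my telescoping suggests. The discrepancy is resolved by noting that without the upper curvature bound being tight, the worst case forces the weaker linear-in-$\| d_k \|$ term of the optimality condition to dominate when $\| d_k \|$ is large, yielding $\| \g f(x_k) \| \lesssim \| d_k \|$ and therefore $f(x_k) - f(x_{k+1}) \gtrsim \| \g f(x_k) \|^3$; telescoping this form gives $(N+1)(\min_k \| \g f(x_k) \|)^3 \lesssim f(x_0) - f^*$, i.e.\ $\min_k \| \g f(x_k) \| = \O(N^{-1/3})$, which is \emph{weaker} than \cref{eq:conv2}. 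Careful bookkeeping of which term dominates, and choosing the branch that the stated theorem actually requires, is the delicate point; I would handle it by splitting into the cases $\| d_k \| \leq 1$ and $\| d_k \| > 1$ and tracking the resulting constant explicitly, then selecting whichever branch yields the $\O(1/\sqrt{N})$ bound claimed in \cref{eq:conv2}.
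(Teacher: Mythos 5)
Your proposal has a genuine gap, and it is one you half-acknowledge in your closing paragraph: the mechanism you chose cannot produce the claimed rate, and your proposed resolution (``selecting whichever branch yields the $\O(1/\sqrt{N})$ bound claimed'') is not a valid move, because the worst case sits in the other branch. Concretely, your per-step estimate is $f(x_k)-f(x_{k+1})\geq \frac{M}{12}\|d_k\|^3$ together with $\|d_k\|\geq c\min\{\|\g f(x_k)\|,\|\g f(x_k)\|^{1/2}\}$. In the regime that decides the complexity bound ($\|\g f(x_k)\|$ small), the binding branch is the linear one, so you only get $f(x_k)-f(x_{k+1})\geq c'\|\g f(x_k)\|^3$; telescoping then gives $\min_{k}\|\g f(x_k)\|=\O(N^{-1/3})$, i.e.\ $\O(\epsilon^{-3})$ complexity, strictly weaker than the theorem. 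This loss is structural, not a matter of bookkeeping: in passing from \cref{Jan_19_6} to the step-norm bound you discard the term $-\frac{1}{2}\<\g f(x_k),d_k\>$, which is exactly the term carrying the $\|\g f(x_k)\|^2$-sized decrease (when the reduced Hessian is close to $MI$ one has $\|d_k\|\approx\|\g f(x_k)\|/M$, so $\|d_k\|^3$ is of order $\|\g f(x_k)\|^3$ while the true model decrease is of order $\|\g f(x_k)\|^2/M$). The paper's proof uses a device that never appears in your argument: since $\g f(x_k)\in\V_k$, every point $-\eta\,\g f(x_k)$ with $\eta\geq 0$ is feasible for the subproblem, so the model value at the minimizer $d_k$ is at most the model value along the gradient ray; the one-sided bound $MI\succeq V_k^\top \h f(x_k)[V_k\cdot]$ controls the quadratic term of the model along that ray, and together with \eqref{AS1} and $M\leq\gamma_k\leq\gamma$ this gives
\begin{equation*}
f(x_{k+1})\;\leq\;\inf_{\eta\geq 0}\Big\{f(x_k)-\eta\|\g f(x_k)\|^2+\tfrac{M\eta^2}{2}\|\g f(x_k)\|^2+\tfrac{\gamma\eta^3}{6}\|\g f(x_k)\|^3\Big\}.
\end{equation*}
Choosing $\eta=\min\{1/(2M),\,3M/(\gamma\|\g f(x_k)\|)\}$ yields the per-step decrease $\min\{\|\g f(x_k)\|^2/(4M),\,3M\|\g f(x_k)\|/(2\gamma)\}$, whose quadratic branch is what telescopes to the claimed $\O(1/\sqrt{N})$.

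A second, smaller error: you invoke $\|Q_k h_k\|\leq M\|h_k\|$, reading $MI\succeq Q_k$ as an operator-norm bound. The hypothesis is one-sided; $Q_k$ may have arbitrarily negative eigenvalues, so $\|Q_k\|$ need not be at most $M$. This step is repairable (second-order optimality of the cubic subproblem gives $Q_k\succeq -\frac{\gamma_k\|d_k\|}{2}I$, hence $\|Q_k h_k\|\leq\max\{M,\gamma_k\|d_k\|/2\}\|h_k\|$, and your inversion survives with worse constants), but the repaired estimate still feeds into the $\|d_k\|^3$ descent bound and therefore still only yields $\O(N^{-1/3})$. The fix for the theorem is not to patch this inequality but to replace the step-norm argument by the gradient-ray (Cauchy-point) comparison above, which is where the hypotheses $\g f(x_k)\in\V_k$ and $MI\succeq V_k^\top\h f(x_k)[V_k\cdot]$ are actually needed.
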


\begin{proof}
From \cref{AS1}, we have that
\begin{align}
&f(x_{k+1})\leq f(x_k)+\<\g f(x_k),d_k\>+\frac{1}{2}\< \h f(x_k)[d_k],d_k \>+\frac{M}{6}\| d _k\|^3\notag \\
&\leq f(x_k)+\<\g f(x_k),d_k\>+\frac{1}{2}\< \h f(x_k)[d_k],d_k \>+\frac{\gamma_k}{6}\| d _k\|^3\notag\\
&= \min\big\{ f(x_k)+\<\g f(x_k),d\>+\frac{1}{2}\< \h f(x_k)[d],d \>+\frac{\gamma_k}{6}\| d \|^3: d\in \V_k \big\}\notag \\
&\leq \inf_{\eta\geq 0}\left\{f(x_k)-\eta\|\g f(x_k)\|^2+\frac{M\eta^2}{2}\| \g f(x_k) \|^2+\frac{\eta^3 \gamma}{6}\| \g f(x_k)\|^3\right\},\label{Jan_24_2}
\end{align}
where the first inequality comes from \cref{AS1}, the second inequality comes from $\gamma_k\geq M,$ the last inequality comes from $M I \succeq V_k^\top \h f(x_k)[V_k\cdot],$ $\gamma\geq \gamma_k$ and $\g f(x_k)\in \V_k.$ Choose 
$$\eta=\min\left\{ \frac{1}{2M},\frac{3M}{\gamma \| \g f(x_k)\|} \right\}$$
 in \cref{Jan_24_2}. Because $\eta\leq 3M/(\gamma \| \g f(x_k)\|),$ we have that 
 \begin{equation}\label{Jan_24_3}
 \frac{\eta^3 \gamma}{6}\| \g f(x_k)\|^3\leq \frac{M\eta^2}{2}\| \g f(x_k) \|^2.
\end{equation}
%Because $\eta\leq 1/(2M)$ and $\eta\leq 3M/(\gamma \| \g f(x_k)\|),$ 
By our choice of $\eta$, we get $-\eta + M \eta^2 = -\frac{1}{4M}$ if $\eta = \frac{1}{2M}$, and $-\eta + M \eta^2 \leq - \frac{3M}{2\gamma \| \g f(x_k) \|}$ if $\eta = \frac{3M}{\gamma \| \g f(x_k) \|}$.
Thus
we have that
 \begin{equation}\label{Jan_24_4}
\(-\eta+M\eta^2\)\| \g f(x_k) \|^2\leq-\min\left\{ \frac{1}{4M}, \frac{3M}{2\gamma \| \g f(x_k)\|}\right\}\| \g f(x_k) \|^2.
\end{equation}
Substituting \cref{Jan_24_3} and \cref{Jan_24_4} into \cref{Jan_24_2}, we get 
\begin{equation}\label{Jan_24_5}
f(x_{k+1})\leq f(x_k)-\min\left\{ \frac{\| \g f(x_k) \|^2}{4M}, \frac{3M\| \g f(x_k) \|}{2\gamma }\right\}.
\end{equation}
Taking summation of \cref{Jan_24_5} from $0$ to $k$, we get
\begin{equation}\label{Jan_25_1}
\min_{0\leq k\leq N} \min\left\{ \frac{\| \g f(x_k) \|^2}{4M}, \frac{3M\| \g f(x_k) \|}{2\gamma }\right\}\leq \frac{f(x_0)-f^*}{N}.
\end{equation}
Therefore, we have that
\begin{equation}\label{Jan_25_2}
\begin{aligned}
	\min_{0\leq k\leq N}\| \g f(x_k) \|\leq{}& \max\left\{\sqrt{\frac{4M(f(x_0)-f^*)}{\beta N}},\frac{2\gamma (f(x_0)-f^*)}{3MN}\right\}\\={}&\O(1/\sqrt{N}).
\end{aligned}
\end{equation}
Thus, Algorithm~\ref{alg1} will terminate within $\O(1/\epsilon^{2})$ iterations.
\end{proof}

%%%%%%%%%%%%%%%%%%%%%%%%%SNL%%%%%%%%%%%%%%%%%%%%%%%%%%%%%%%%%%%%%%%%%%%%%%%%%%%%%%%%%%%%%%%%%%%%%%%%%%%%%%%%%%%%%%%%%%%%%%%%%%%%%%%%%%%%%%%%%%
\section{Feasible set of sensor network localization}\label{sec-SNL}

In this section, we aim to prove that the feasible set of \cref{SNL} is a Riemannian manifold. Since the constraints of different rows of $R$ are independent, we only need to prove that the feasible set of every row of $R$, i.e., the intersection of different spheres, is a Riemannian manifold. Given nonnegative integers $k,r$, let $\left\{ y_i:\ i\in [k] \right\}\subset \R^r$ and $d\in \R^k_{+}$. We define the matrix $R:=[y_1,y_2,\ldots,y_k]^\top\in \R^{k\times r}$. Moreover, we define the following set:
\begin{equation}\label{Intp}
\B_{R,d}:=\left\{ x\in \R^r:\ \|x-y_i\|^2=d_i^2,\ \forall\ i\in [k]\right\},
\end{equation}
which can be regarded as the intersection of several spheres with centers $y_i$'s and radii $d_i$'s. The following proposition shows that $\B_{R,d}$ is a Riemannian manifold.

\begin{proposition}\label{Intpmani}
Suppose $\B_{R,d}\neq \emptyset.$ Then the following two statements hold:
\begin{itemize}
\item[(i)] If $\rr\( [e,R]\)=k$ and there exists $x\in \B_{R,d}$ without LICQ, then $\B_{R,d}=\{x\}.$
\item[(ii)] If $\rr\( [e,R]\)\leq k-1,$ then there exists $\I \subset [k]$ such that $\B_{R,d}=\B_{R_\I,d_\I}.$ 
\end{itemize}
Moreover, $\B_{R,d}$ is Riemannian submanifold in $\R^r.$
\end{proposition}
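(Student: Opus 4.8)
The plan is to describe $\B_{R,d}$ as the intersection of a single sphere with an affine subspace, and then to read off all three claims from elementary Euclidean geometry. First I would subtract the constraint $\|x-y_1\|^2=d_1^2$ from each of the others: for $i\geq 2$ the difference $\|x-y_i\|^2-\|x-y_1\|^2=d_i^2-d_1^2$ is the \emph{linear} equation $2\<y_1-y_i,x\>=d_i^2-d_1^2+\|y_1\|^2-\|y_i\|^2$. Hence $\B_{R,d}=S_1\cap L$, where $S_1:=\{x:\|x-y_1\|=d_1\}$ and $L$ is the affine subspace cut out by these $k-1$ hyperplanes, whose common normal space is $V:=\mathrm{span}\{y_i-y_1:\ i\geq 2\}$. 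A one-line row reduction of $[e,R]$ gives $\rr([e,R])=\dim V+1$, so affine independence of the centers ($\rr([e,R])=k$) is exactly the statement $\dim V=k-1$.

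For (ii), assume $\rr([e,R])\leq k-1$ and let $\I\subset[k]$ index a maximal affinely independent subset of the rows of $[e,R]$. The inclusion $\B_{R,d}\subseteq\B_{R_\I,d_\I}$ is trivial. For each $j\notin\I$ I would write $y_j=\sum_{i\in\I}\alpha_i y_i$ with $\sum_{i\in\I}\alpha_i=1$, which is possible since $[1,y_j^\top]$ lies in the span of $\{[1,y_i^\top]:i\in\I\}$. Substituting the identities $2\<x,y_i\>=\|x\|^2+\|y_i\|^2-d_i^2$ (valid on $\B_{R_\I,d_\I}$) into $\|x-y_j\|^2$ and using $\sum_i\alpha_i=1$, the $\|x\|^2$ terms cancel, so $\|x-y_j\|^2$ is \emph{constant} on $\B_{R_\I,d_\I}$. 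The hypothesis $\B_{R,d}\neq\emptyset$ supplies a point at which this constant equals $d_j^2$; hence $\|x-y_j\|^2\equiv d_j^2$ on $\B_{R_\I,d_\I}$, giving the reverse inclusion. This reduces everything to the affinely independent case.

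For (i), assume $\rr([e,R])=k$ and that LICQ fails at some $x\in\B_{R,d}$, i.e.\ $\sum_i\lambda_i(x-y_i)=0$ with the $\lambda_i$ not all zero. If $\sum_i\lambda_i=0$ then $\sum_i\lambda_i[1,y_i^\top]=0$ contradicts affine independence, so $\sum_i\lambda_i\neq0$ and $x=(\sum_i\lambda_i y_i)/(\sum_i\lambda_i)$ lies in the affine hull $y_1+V$ of the centers. On the other hand, $\B_{R,d}=S_1\cap L$ is a round sphere inside $L$ centered at the orthogonal projection $p$ of $y_1$ onto $L$, and a short computation shows $p$ is the unique point where $L$ meets $y_1+V$ (indeed $y_1-p\in V$ since $p$ is an orthogonal projection, so $p\in y_1+V$, while $p\in L$). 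Since $x$ lies in both $L$ and $y_1+V$, we get $x=p$, so the radius of the slice is $\|x-p\|=0$ and $\B_{R,d}=\{x\}$.

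Finally, for the manifold assertion I would combine the two cases. After the reduction in (ii) the centers may be assumed affinely independent, and then either LICQ holds at every point of $\B_{R,d}$ — in which case $\B_{R,d}$ is an embedded submanifold as the regular level set of $x\mapsto(\|x-y_i\|^2)_{i\in[k]}$ — or LICQ fails somewhere and (i) makes $\B_{R,d}$ a single point, trivially a submanifold; equipping it with the metric induced from $\R^r$ makes it a Riemannian submanifold. Equivalently, one may argue uniformly from the first paragraph that $S_1\cap L$ is always a round sphere (possibly a point) inside the affine subspace $L$, hence a submanifold. I expect the main obstacle to be part (i): correctly combining the failure of LICQ with affine independence to force $x$ to coincide with the center $p$ of the sphere-slice, since this is the one place where the degenerate geometry genuinely collapses the feasible set to a point rather than merely lowering its dimension, and it is what allows the manifold conclusion to survive even without LICQ.
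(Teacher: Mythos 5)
Your proposal is correct, but it follows a genuinely different route from the paper's. The paper argues purely algebraically: for (i) it assumes, for contradiction, two distinct points $x\neq z$ in $\B_{R,d}$, combines the LICQ-failure multiplier $\lambda$ with the identity $(ex^\top-R)(z-x)=-\|z-x\|^2e/2$ to deduce $\lambda^\top[e,R]=0$, contradicting $\rr([e,R])=k$; for (ii) it takes a nonzero $\lambda$ with $\lambda^\top[e,R]=0$ and shows that any constraint $j$ with $\lambda_j\neq 0$ is redundant (nonemptiness of $\B_{R,d}$ is used to kill the constant term $\sum_i\lambda_i(\|y_i\|^2-d_i^2)$), removing redundant constraints one at a time; the manifold claim then follows by a case split between ``LICQ everywhere'' and ``singleton.'' You instead decompose $\B_{R,d}=S_1\cap L$ as one sphere intersected with an affine subspace whose normal space is $V=\mathrm{span}\{y_i-y_1\}$, observe $\rr([e,R])=\dim V+1$, prove (ii) by writing each dropped center as an affine combination of an affinely independent subfamily (removing all redundant constraints at once, with an explicit $\I$), and prove (i) directly rather than by contradiction: failure of LICQ places $x$ in the affine hull of the centers, which meets $L$ only at the slice-center $p$, forcing the slice radius $\|x-p\|$ to vanish. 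Your route buys a stronger structural fact that the paper never states: a nonempty $\B_{R,d}$ is always a round sphere (possibly a point) inside $L$, centered at the orthogonal projection of $y_1$ onto $L$; this gives the Riemannian submanifold conclusion uniformly, together with its dimension, and with no case analysis. The paper's route buys brevity and self-containedness: it needs only multiplier manipulations, with no orthogonal decomposition into $V$ and $V^\perp$ and no Pythagorean identities.
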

\begin{proof}
We first prove (i).\\
Assume on the contrary that $|\B_{R,d}|>1$. Because $x$ doesn't satisfy LICQ, there exists $\lambda\in \R^k$ such that $\lambda\neq 0$ and 
\begin{equation}\label{July_23_1}
 \lambda^\top \(ex^\top-R\)=0.
\end{equation}
 Because $|\B_{R,d}|>1,$ there exists $z\in \B_{R,d}$ such that $z\neq x.$ Define $h:=z-x\neq 0.$ For any $i\in [k],$ define the function $f_i:\R\rightarrow \R$ such that $f_i(t):=\| x+th- y_i\|^2-d_i^2.$ Because $x,z\in \B_{R,d},$ we have that $f_i(0)=f_i(1)=0.$ This implies that $\|h\|^2+2\<h,x-y_i\>=0$ for any $i\in [k].$ Thus, we get the following equation
\begin{equation}\label{July_23_2}
\(ex^\top-R\)h=-\|h\|^2e/2.
\end{equation}
From \cref{July_23_1} and \cref{July_23_2}, we have that 
\begin{equation}\label{July_23_3}
\lambda^\top e=-2\lambda^\top \(ex^\top-R\)h/\|h\|^2=0.
\end{equation}
Substituting $\lambda^\top e=0$ into (\ref{July_23_1}), we get $\lambda^\top R=0.$ Therefore, we have that $\lambda^\top [e,R]=0,$ which contradicts to the fact that $\rr\(  [e,R] \)=k.$ \\

Now, we move on to prove (ii). Because $\rr\([e,R]\)<k$, we have that there exists $\lambda\in \R^k$ such that $\lambda\neq 0$ and $\lambda^\top [e,R]=0.$ This implies that 
 \begin{equation}\label{July_23_4}
 \forall x\in \R^r,\ \sum_{i=1}^k \lambda_i \|x\|^2\;=\; 0 \;=\;
  \sum_{i=1}^k \lambda_i \<x,y_i\>.
 \end{equation}
 Suppose $z\in \B_{R,d},$ then we have that 
\begin{align}
&0=\sum_{i=1}^k \lambda_i\(\| z-y_i\|^2-d_i^2\)=\sum_{i=1}^k \lambda_i\(\|z\|^2-2\<z,y_i\>+\|y_i\|^2-d_i^2\)\notag \\
&=\sum_{i=1}^k \lambda_i \|z\|^2-2\sum_{i=1}^k \lambda_i \<z,y_i\>+\sum_{i=1}^k\lambda_i\( \|y_i\|^2-d_i^2 \)=\sum_{i=1}^k\lambda_i\( \|y_i\|^2-d_i^2 \),\label{July_23_5}
\end{align}
where the last equality comes from (\ref{July_23_4}). Because $\lambda\neq 0$, there exists $j\in [k]$ such that $\lambda_j\neq 0.$ From (\ref{July_23_4}) and (\ref{July_23_5}), we have that for any $x\in \R^r,$
\begin{align}
&\lambda_j\( \|x-y_j\|^2-d_j^2\)=\lambda_j\( \|x\|^2-2\<x,y_j\>+\|y_j\|^2-d_j^2\)\notag \\
&=-\sum_{i\in [k]\setminus\{j\}} \lambda_i \|x\|^2+2\sum_{i\in [k]\setminus\{j\}} \lambda_i\<x,y_i\>-\sum_{i\in [k]\setminus\{j\}} \lambda_i\( \|y_i\|^2-d_i^2 \)\notag \\
&=-\sum_{i\in [k]\setminus\{j\}}\lambda_i\(\|x-y_i\|^2-d_i^2\).\label{July_23_6}
\end{align}
From (\ref{July_23_6}) and $\lambda_j\neq 0,$ we know that as long as for any $i\in [k]\setminus\{j\},$ $\|x-y_i\|^2-d_i^2=0,$ then $\|x-y_j\|^2-d_j^2=0.$ This implies that $\|x-y_j\|^2=d_j^2$ is a redundant constraint and $\B_{R,d}=\B_{R_{[k]\setminus\{j\}},d_{[k]\setminus\{j\}}}$.  

Finally, we prove that $\B_{R,d}$ is a Riemannian manifold.  if $\rr\( [e,R]\)=k,$ then from (i), $\B_{R,d}$ is a singleton or the LICQ holds everywhere on $\B_{R,d}.$ For each case, $\B_{R,d}$ is a Riemannian submanifold in $\R^r.$ If $\rr\( [e,R]\)<k,$ then from (ii), there exists some redundant constraints in $\B_{R,d}.$ Because there are finitely many constraints in $\B_{R,d},$ after removing finitely many redundant constraints, we can reduce to the first case. Therefore, $\B_{R,d}$ is always a Riemannian submanifold in $\R^r$.
\end{proof}

%%%%%%%%%%%%%%%%%%%%%%%NEXP%%%%%%%%%%%%%%%%%%%%%%%%%%%%%%%%%%%%%%%%%%%%%%%%%%%%%%%%%%%%%%%%%%%%%%%%%%%%%%%%%%%%%%%%%%%%%%%%%%%%%%%%%%%%%%%%%%%
\section{Numerical experiments}\label{sec-exp}
In this section,  we test the numerical performance of our proposed Algorithm \ref{alg1} against some state-of-the-art optimization approaches to illustrate the high efficiency of our 
 algorithm. All the experiments are conducted using Matlab R2021b on a Workstation with a Intel(R) Xeon(R) CPU E5-2680 v3 @ 2.50GHz Processor and 128GB RAM. In Algorithm \ref{alg1}, we choose the subspace $\V_k$ as 
 $$\V_k:={\rm span}\( \g f(x_k), \P_{x_k}\(x_k-x_{k-1}\) \)$$ 
 for any $k\geq 0$, and we employ the finite difference approximation scheme in \cref{eq-app} to compute an approximation for the dimension reduced Hessian of $f$ in $\V_k$. The parameter $\eta$ in \cref{eq-app} is set as $\eta = 10^{-7}$ for all the test instances. In our numerical experiments, we compare the performance of our proposed algorithm with several well-recognized state-of-the-art Riemannian optimization algorithms from the Manopt package \cite{boumal2014manopt}, including the trust region method (TR),  conjugate gradient method (CG), Broyden–Fletcher–Goldfarb–Shanno quasi-Newton method (BFGS) and Barzilai-Borwein method (BB). All the parameters are set as their default values for these approaches. The test problems are chosen to be low-rank max-cut SDP problems, 
 discretized problems of the Kohn-Sham equation, K-measn clustering and sensor network localization problems.  It is worth mentioning that the manifold in \cref{SNL} is not supported in Manopt, hence we compare our proposed algorithm with the build-in sequential quadratic programming (SQP)  and 
 interior point method (IPM) implemented in the {\sc Matlab} function
 {\tt fmincon} for solving \eqref{SNL}.

%%%%%%%%%%%%%%%%%%%%%%Max-Cut%%%%%%%%%%%%%%%%%%%%%%%%%%%%%%%%%%%%%%%%%%%%%%%%%%%%%%%%%%%%%%%%%%%%%%%%%%%%%%%%%%%%%%%%%%%%%%%%%%%%%%%%%%%%%%%%%%%
\subsection{Low-rank max-cut SDP problem}
In this subsection, we consider the following low rank formulation of the max-cut SDP problem \cite{burer2003nonlinear,burer2005local}, 
\begin{equation}\label{maxcut}
\min\left\{-\<L,RR^\top\>:\ \dd(RR^\top)=e,\ R\in \R^{n\times r}\right\},
\end{equation}
where $L$ is the Laplacian matrix of an undirected graph with $n$ vertices and $m$ edges. This problem has been used extensively as a benchmark problem for testing
Riemannian optimization methods and various methods implemented in Manopt 
have achieved impressive numerical efficiency in solving large problems with
$n$ going beyond $5000$.

In \eqref{maxcut}, we choose $r=\lceil \sqrt{2n}\rceil$ to be the theoretical rank bound given by \cite{pataki1998rank} to guarantee that \cref{maxcut} is equivalent to its convex SDP formulation. We terminate the algorithms if the norm of the Riemannian gradient is smaller than $10^{-4}.$ We set the 
maximum number of iterations of the trust region method to be $1000$ and all the other methods to be $10000$. We use the following KKT residue to measure the accuracy of the output:
\begin{multline}\label{KKT}
{\rm Resp}:=\frac{\|\dd(RR^\top)-e\|}{1+\sqrt{n}},\ {\rm Resd}:=\frac{\|\Pi_{\S_-^n}\( L-\dd(\lambda) \)\|}{1+\|L\|},\\
 {\rm Pdgap}:=\frac{|\<LR,R\>-\lambda^\top e|}{1+|\<LR,R\>|+|\lambda^\top e|},\ {\rm Residue}:=\max\{{\rm Resp}, {\rm Resd}, {\rm Pdgap}\},
\end{multline}
where the dual variable $\lambda\in \R^n$ can be recovered from the linear system (13) in \cite{boumal2016non}. We consider the Gset graphs\footnote{Dataset from \href{https://web.stanford.edu/~yyye/yyye/Gset/}{https://web.stanford.edu/~yyye/yyye/Gset/}.}, which are frequently used as benchmark test for max-cut problems. Since there are too many graphs in Gset, we only choose graphs whose number of vertices are at least $5000$.

\begin{center}
\begin{footnotesize}
\begin{longtable}{lclclcccclll} 
\caption{Comparison of  RDRSOM, TR, CG, BFGS, BB methods for low rank max-cut SDP.}\\
\hline
problem & metric & RDRSOM & TR & CG& BFGS & BB  \\ \hline
 g55 & Fval & -44157.8 & -44157.8 & -44157.8 & -44157.8 & -44157.8\\
 n=5000 & Residue & 4.6e-10 & 5.7e-11 & 2.3e-10 & 1.9e-09 & 3.6e-11 \\
 m=12498 & Time [s] &  9.3 & 11.9 & 16.1 & 41.4 & 36.4\\
 \hline
 g56 & Fval & -19040.0 & -19040.0 & -19040.0 & -19040.0 & -19040.0\\
 n=5000 & Residue & 2.8e-09 & 6.2e-11 & 9.6e-10 & 5.6e-09 & 6.5e-11 \\
 m=12498 & Time [s] &  6.9 &  9.4 & 12.6 & 34.1 & 19.5\\
 \hline
 g57 & Fval & -15542.0 & -15542.0 & -15542.0 & -15542.0 & -15542.0\\
n=5000 & Residue & 2.2e-10 & 2.3e-10 & 1.4e-09 & 3.5e-10 & 5.5e-09 \\
m=10000 & Time [s] & 32.5 & 138.0 & 81.4 & 209.8 & 135.4\\
\hline
g58 & Fval & -80544.8 & -80544.8 & -80544.8 & -80544.8 & -80544.8\\
n=5000 & Residue & 8.0e-11 & 4.7e-12 & 1.2e-10 & 3.2e-10 & 7.1e-12 \\
m=29570 & Time [s] & 28.4 & 34.1 & 46.2 & 128.6 & 106.2\\
\hline
g59 & Fval & -29249.3 & -29249.3 & -29249.3 & -29249.3 & -29249.3\\
n=5000 & Residue & 3.0e-10 & 2.2e-11 & 8.9e-10 & 1.9e-09 & 1.1e-09 \\
m=29570 & Time [s] & 21.4 & 23.6 & 39.7 & 99.4 & 82.0\\
\hline
g60 & Fval & -60889.1 & -60889.1 & -60889.1 & -60889.1 & -60889.1\\
n=7000 & Residue & 4.3e-10 & 3.0e-10 & 8.5e-10 & 6.3e-10 & 5.0e-10 \\
m=17148 & Time [s] & 14.4 & 19.3 & 19.7 & 63.4 & 37.1\\
\hline
g61 & Fval & -27312.4 & -27312.4 & -27312.4 & -27312.4 & -27312.4\\
n=7000 & Residue & 7.1e-10 & 8.5e-10 & 1.1e-09 & 1.7e-09 & 1.1e-09 \\
m=17148 & Time [s] & 21.0 & 90.9 & 32.0 & 92.1 & 61.4\\
\hline
g62 & Fval & -21723.6 & -21723.6 & -21723.6 & -21723.6 & -21723.6\\
n=7000 & Residue & 1.8e-10 & 8.0e-10 & 2.5e-09 & 5.1e-10 & 8.2e-09 \\
m=14000 & Time [s] & 71.6 & 373.0 & 123.3 & 521.6 & 210.8\\
\hline
g63 & Fval & -112977.7 & -112977.7 & -112977.7 & -112977.7 & -112977.7\\
 n=7000 & Residue & 8.3e-11 & 2.0e-11 & 2.8e-10 & 1.3e-10 & 3.5e-10 \\
 m=41459 & Time [s] & 79.6 & 582.7 & 114.4 & 378.1 & 324.4\\
 \hline
 g64 & Fval & -41863.6 & -41863.6 & -41863.6 & -41863.6 & -41863.6\\
 n=7000 & Residue & 5.0e-10 & 8.5e-11 & 6.1e-10 & 1.2e-09 & 9.1e-10 \\
 m=41459 & Time [s] & 66.2 & 72.7 & 134.1 & 297.1 & 327.2\\
 \hline
 g65 & Fval & -24822.2 & -24822.2 & -24822.2 & -24822.2 & -24822.2\\
 n=8000 & Residue & 1.1e-10 & 1.8e-10 & 1.6e-09 & 5.8e-10 & 8.4e-09 \\
 m=16000 & Time [s] & 74.6 & 740.0 & 146.1 & 544.1 & 254.5\\
 \hline
 g66 & Fval & -28308.9 & -28308.9 & -28308.9 & -28308.9 & -28308.9\\
 n=9000 & Residue & 5.5e-10 & 7.0e-10 & 1.2e-09 & 6.2e-10 & 6.8e-09 \\
 m=18000 & Time [s] & 100.0 & 501.8 & 163.1 & 810.3 & 307.8\\
 \hline
 g67 & Fval & -30977.7 & -30977.7 & -30977.7 & -30977.7 & -30977.7\\
 n=10000 & Residue & 1.3e-10 & 2.4e-10 & 9.7e-10 & 2.6e-10 & 8.3e-09 \\
 m=20000 & Time [s] & 127.9 & 1371.4 & 177.8 & 1114.4 & 356.9\\
 \hline
 g70 & Fval & -39446.1 & -39446.1 & -39446.1 & -39446.1 & -39446.1\\
 n=10000 & Residue & 2.2e-10 & 3.7e-12 & 1.6e-09 & 2.3e-10 & 3.4e-09 \\
 m=9999 & Time [s] & 36.8 & 288.4 & 63.5 & 250.8 & 100.7\\
 \hline
 g72 & Fval & -31234.2 & -31234.2 & -31234.2 & -31234.2 & -31234.2\\
 n=10000 & Residue & 8.5e-11 & 1.8e-12 & 5.8e-10 & 2.0e-10 & 1.1e-08 \\
 m=20000 & Time [s] & 112.8 & 881.2 & 191.9 & 907.5 & 359.2\\
 \hline
 g77 & Fval & -44182.7 & -44182.7 & -44182.7 & -44182.7 & -44182.7\\
 n=14000 & Residue & 7.6e-11 & 1.4e-10 & 7.1e-10 & 1.2e-10 & 1.0e-08 \\
 m=28000 & Time [s] & 272.6 & 1576.9 & 450.4 & 2402.6 & 603.8\\
 \hline
 g81 & Fval & -62624.8 & -62624.8 & -62624.8 & -62624.8 & -62624.8\\
 n=20000 & Residue & 4.6e-11 & 1.3e-10 & 1.4e-09 & 7.9e-11 & 2.0e-08 \\
 m=40000 & Time [s] & 662.7 & 4283.9 & 1219.0 & 6087.4 & 1062.1\\
 \hline
 \end{longtable}
 \end{footnotesize}
\end{center}

From the numerical results in Table 1, we can see that RDRSOM is faster than all the other algorithms implemented in Manopt in every instance. We should emphasize that the various algorithms in Manopt are well tested for
max-cut problems. Thus it is quite surprising that RDRSOM can  
perform even better than those well tested algorithms.
Among the algorithms in Manopt, the 
conjugate gradient method also behaves very well and its speed is close to RDRSOM for some instances. One reason is that the subspace $\V_k={\rm span}\( \g f(x_k), \P_{x_k}\(x_k-x_{k-1}\)\),$ contains the conjugate gradient direction. Also, it has been proved in \cite{zhang2022drsom} that DRSOM is exactly the conjugate gradient method for convex quadratic programming.

%%%%%%%%%%%%%%%%%%%%%KS-equation%%%%%%%%%%%%%%%%%%%%%%%%%%%%%%%%%%%%%%%%%%%%%%%%%%%%%%%%%%%%%%%%%%%%%%%%%%%%%%%%%%%%%%%%%%%%%%%%%%%%%%%%%%%%%%%%%%%%%%%%%
\subsection{Discretized 1D Kohn-Sham Equation}
In this subsection, we consider the following discretized problem of the 1D Kohn-Sham Equation problem \cite{lin2013elliptic,liu2014convergence}:
\begin{multline}\label{1KS}
\min\Big\{\frac{1}{2}{\rm tr}( R^\top LR )+\frac{\alpha}{4}{\rm diag}(RR^\top)^\top L^{-1} {\rm diag}(RR^\top):\\ R^\top R=I_p,\ R\in \R^{n\times r}\Big\},
\end{multline}
where $L$ is a tri-diagonal matrix with $2$ on its diagonal and $-1$ on its subdiagonal and $\alpha>0$ is a parameter. We choose $\alpha=1,$ $n\in \{1000,2000,5000,7000,10000\},$ $r\in\{20,50\}.$ We terminate the algorithms when the norm of the Riemannian gradient is smaller than $10^{-4}.$ We set the maximum number of iterations of the trust region method to be 1000 and all the other methods to be 10000.

\begin{center}
\begin{footnotesize}
\begin{longtable}{lclclcccclll} 
\caption{Comparison of  RDRSOM, TR, CG, BFGS, BB methods for discretized problems of
the 1D Kohn-Sham equation.}\\
\hline
problem & metric & RDRSOM & TR & CG& BFGS & BB  \\ \hline
n=1000 & Fval &  210.7 &  210.7 &  210.7 &  210.7 &  210.7\\
 r=20 & Gradnorm & 8.7e-05 & 4.1e-05 & 9.4e-05 & 8.7e-05 & 8.3e-05 \\
  & Time [s] &  0.2 &  2.5 &  1.0 & 13.3 &  0.9\\
 \hline
 n=1000 & Fval & 2810.7 & 2810.7 & 2810.7 & 2810.7 & 2810.7\\
 r=50 & Gradnorm & 9.9e-05 & 4.9e-08 & 9.9e-05 & 8.9e-05 & 9.9e-05 \\
  & Time [s] &  0.6 &  6.0 &  2.8 & 37.5 &  4.1\\
 \hline
n=2000 & Fval &  210.7 &  210.7 &  210.7 &  210.7 &  210.7\\
 r=20 & Gradnorm & 9.3e-05 & 4.7e-06 & 8.2e-05 & 8.6e-05 & 6.2e-05 \\
  & Time [s] &  0.2 &  4.4 &  1.3 & 15.6 &  1.1\\
 \hline
 n=2000 & Fval & 2810.7 & 2810.7 & 2810.7 & 2810.7 & 2810.7\\
 r=50 & Gradnorm & 1.0e-04 & 1.7e-05 & 9.7e-05 & 8.3e-05 & 1.0e-04 \\
  & Time [s] &  0.9 &  4.8 &  3.8 & 39.2 &  5.8\\
 \hline
 n=5000 & Fval &  210.7 &  210.7 &  210.7 &  210.7 &  210.7\\
 r=20 & Gradnorm & 9.2e-05 & 4.8e-05 & 9.3e-05 & 7.6e-05 & 6.2e-05 \\
  & Time [s] &  0.3 &  2.0 &  1.8 & 15.4 &  1.8\\
 \hline
 n=5000 & Fval & 2810.7 & 2810.7 & 2810.7 & 2810.7 & 2810.7\\
 r=50 & Gradnorm & 9.8e-05 & 2.2e-07 & 8.2e-05 & 8.0e-05 & 9.2e-05 \\
  & Time [s] &  1.7 & 21.1 &  5.6 & 42.9 &  7.1\\
 \hline
 n=7000 & Fval &  210.7 &  210.7 &  210.7 &  210.7 &  210.7\\
 r=20 & Gradnorm & 9.9e-05 & 1.1e-06 & 7.6e-05 & 9.1e-05 & 9.8e-05 \\
  & Time [s] &  0.5 &  3.9 &  1.8 & 17.2 &  2.4\\
 \hline
 n=7000 & Fval & 2810.7 & 2810.7 & 2810.7 & 2810.7 & 2810.7\\
 r=50 & Gradnorm & 9.1e-05 & 1.3e-05 & 9.4e-05 & 9.6e-05 & 8.8e-05 \\
  & Time [s] &  2.7 & 15.4 &  7.0 & 51.9 & 10.4\\
 \hline
 n=10000 & Fval &  210.7 &  210.7 &  210.7 &  210.7 &  210.7\\
 r=20 & Gradnorm & 1.0e-04 & 1.3e-06 & 9.1e-05 & 8.0e-05 & 9.8e-05 \\
  & Time [s] &  0.6 &  4.5 &  1.9 & 16.9 &  3.6\\
 \hline
 n=10000 & Fval & 2810.7 & 2810.7 & 2810.7 & 2810.7 & 2810.7\\
 r=50 & Gradnorm & 9.9e-05 & 9.9e-07 & 9.2e-05 & 8.7e-05 & 9.0e-05 \\
  & Time [s] &  3.7 & 31.3 &  9.2 & 59.0 & 24.3\\
 \hline
 \end{longtable}
 \end{footnotesize}
\end{center}

From the numerical results in Table 2, we can see that RDRSOM is more efficient than all the other algorithms in every instance. For some problems, RDRSOM is nearly 5 times faster than the second fastest algorithm. 

%%%%%%%%%%%%%%%%%%%%%%Kmeans%%%%%%%%%%%%%%%%%%%%%%%%%%%%%%%%%%%%%%%%%%%%%%%%%%%%%%%%%%%%%%%%%%%%%%%%%%%%%%%%%%%%%%%%%%%%%%%%%%%%%%%%%%%%%%%%%%%%%%%%%%
\subsection{K-means clustering}
In this subsection, we consider the following continuous formulation of the $k$-means clustering problem \cite{carson2017manifold,huang2022riemannian}:
\begin{equation}\label{kmeans}
\min\left\{ -\<WR,R\>+\lambda \|\Pi_{\R^{n\times K}_-}(R)\|^2:\ R^\top R=I_K,\ RR^\top e=e \right\},
\end{equation}
where $W=-YY^\top$ for some data matrix $Y\in \R^{n\times r}$ with $n$ samples and $r$ features. Here $\lambda>0$ is the penalty parameter that penalizes the the negative entries of 
$R$, and  $K\in \mathbb{N}$ is the number of clusters. We use datasets from the UCI Machine Learning Repository\footnote{Dataset from \href{https://archive.ics.uci.edu/ml/index.php}{https://archive.ics.uci.edu/ml/index.php}.}. We set $\lambda=100,$ and the maximum number of iterations of all algorithms to be $10^5.$ We terminate the algorithms when the norm of the Riemannian gradient is smaller than $10^{-4}.$

\begin{center}
\begin{footnotesize}
\begin{longtable}{lclclcccclll} 
\caption{Comparison of  RDRSOM, TR, CG, BFGS, BB methods for $k$-means clustering problems.}\\
\hline
problem & metric & RDRSOM & TR & CG& BFGS & BB  \\ \hline
 ecoli & Fval & -682.8 & -683.6 & -683.6 & -684.0 & -684.0\\
n=336 & Gradnorm & 9.6e-05 & 5.2e-06 & 8.4e-05 & 9.2e-05 & 9.2e-05 \\
K=5 & Time [s] & 0.1 & 0.7 & 0.3 & 1.1 & 0.6\\
\hline
ecoli & Fval & -689.2 & -689.2 & -689.2 & -689.4 & -689.9\\
n=336 & Gradnorm & 9.8e-05 & 3.7e-07 & 9.3e-05 & 9.9e-05 & 9.8e-05 \\
K=10 & Time [s] & 1.0 & 14.0 & 2.7 & 7.5 & 14.8\\
\hline
ecoli & Fval & -691.0 & -691.9 & -691.9 & -691.4 & -691.3\\
n=336 & Gradnorm & 9.6e-05 & 3.5e-05 & 9.7e-05 & 9.6e-05 & 9.9e-05 \\
K=20 & Time [s] & 2.8 & 54.0 & 5.2 & 17.7 & 7.5\\
\hline
yeast & Fval & -2102.8 & -2102.8 & -2102.8 & -2102.8 & -2102.8\\
n=1484 & Gradnorm & 9.2e-05 & 3.6e-05 & 7.6e-05 & 8.2e-05 & 7.4e-05 \\
K=5 & Time [s] & 0.1 & 1.2 & 0.6 & 2.5 & 0.7\\
\hline
yeast & Fval & -2115.7 & -2115.5 & -2115.6 & -2116.1 & -2115.8\\
n=1484 & Gradnorm & 8.8e-05 & 9.9e-05 & 9.9e-05 & 9.5e-05 & 9.8e-05 \\
K=10 & Time [s] & 1.4 & 8.3 & 2.6 & 12.8 & 5.3\\
\hline
yeast & Fval & -2121.8 & -2121.9 & -2121.9 & -2121.9 & -2121.5\\
n=1484 & Gradnorm & 8.8e-05 & 8.5e-05 & 9.6e-05 & 9.7e-05 & 1.0e-04 \\
K=20 & Time [s] & 4.1 & 60.1 & 12.6 & 59.9 & 36.5\\
\hline
segment & Fval & -8734.4 & -8734.4 & - & -8734.4 & -8734.4\\
n=2310 & Gradnorm & 6.3e-05 & 3.2e-08 & - & 5.5e-05 & 9.9e-05 \\
K=5 & Time [s] & 0.3 & 0.5 & - & 1.2 & 3.7\\
\hline
segment & Fval & -8950.6 & -8950.6 & -8950.6 & -8950.6 & -8950.6\\
n=2310 & Gradnorm & 9.7e-05 & 9.0e-05 & 9.2e-05 & 8.1e-05 & 9.5e-05 \\
K=10 & Time [s] & 0.3 & 0.9 & 0.8 & 3.2 & 8.3\\
\hline
segment & Fval & -9007.6 & -9009.8 & -9004.3 & -9003.0 & -9004.2\\
n=2310 & Gradnorm & 9.2e-05 & 9.4e-05 & 9.4e-05 & 9.7e-05 & 9.9e-05 \\
K=20 & Time [s] & 2.6 & 21.0 & 11.3 & 62.8 & 250.8\\
\hline
spambase & Fval & -2184.1 & -2184.1 & -2184.1 & -2184.1 & -2184.1\\
n=4601 & Gradnorm & 8.5e-05 & 8.9e-05 & 6.4e-05 & 8.6e-05 & 8.6e-05 \\
K=5 & Time [s] & 0.2 & 0.4 & 0.5 & 2.1 & 3.3\\
\hline
spambase & Fval & -2322.2 & -2337.9 & -2335.5 & -2335.5 & -2337.9\\
n=4601 & Gradnorm & 7.7e-05 & 8.9e-05 & 9.4e-05 & 9.0e-05 & 9.8e-05 \\
K=10 & Time [s] & 0.3 & 0.8 & 0.8 & 4.7 & 8.0\\
\hline
spambase & Fval & -2499.8 & -2499.8 & -2495.0 & -2492.2 & -2496.5\\
n=4601 & Gradnorm & 9.1e-05 & 3.1e-05 & 8.0e-05 & 8.5e-05 & 1.0e-04 \\
K=20 & Time [s] & 0.9 & 1.2 & 1.8 & 14.9 & 13.6\\
\hline
magic04 & Fval & -20209.3 & -20209.3 & -20209.3 & -20209.3 & -20209.3\\
n=19020 & Gradnorm & 9.4e-05 & 7.3e-05 & 1.5e-04 & 8.7e-05 & 9.4e-05 \\
K=5 & Time [s] & 0.5 & 0.5 & 0.5 & 3.5 & 5.3\\
\hline
magic04 & Fval & -20690.7 & -20690.7 & -20690.7 & -20690.7 & -20690.7\\
n=19020 & Gradnorm & 9.2e-05 & 9.4e-05 & 1.0e-04 & 9.3e-05 & 9.9e-05 \\
K=10 & Time [s] & 1.7 & 3.0 & 2.6 & 21.7 & 159.7\\
\hline
magic04 & Fval & -20735.7 & -20735.7 & -20735.6 & -20735.0 & -20731.7\\
n=19020 & Gradnorm & 9.5e-05 & 8.8e-05 & 1.4e-02 & 8.2e-05 & 1.4e-02 \\
K=20 & Time [s] & 68.3 & 186.6 & 83.5 & 648.0 & 888.4\\
\hline
 \end{longtable}
  \end{footnotesize}
\end{center}

From Table 3, we can see that RDRSOM can solve all the instances to the required accuracy. It is faster than the other algorithms except for the instance "magic04" with $K=5.$ Note that problem \cref{kmeans} is highly non-convex so the function values of the outputs of different algorithms might be different. The function value of RDRSOM can be either smaller or larger than the other algorithms (see "ecoli" $K=20$ and "spambase" $K=20$) but overall, they are at the same level. Again, CG method appears to be the second best performing algorithm in terms of computation times although it may fail to solve some instances such as 
``segment'' with $K=5.$
%%%%%%%%%%%%%%%%%%%%%%%SNL%%%%%%%%%%%%%%%%%%%%%%%%%%%%%%%%%%%%%%%%%%%%%%%%%%%%%%%%%%%%%%%%%%%%%%%%%%%%%%%%%%%%%%%%%%%%%%%%%%%%%%%%%%%%%%
\subsection{Sensor network localization}
Now we consider the sensor network localization problem \cref{SNL}. To set up our experiments, we follow the settings used in \cite{biswas2006semidefinite} and choose $\lambda=0.1/n$ and $d=3$. We randomly generate $n$ sensors in the three dimensional box $[-0.5,0.5]\times [-0.5,0.5]\times [-0.5,0.5]$ using the 
code \texttt{Sen = rand(n,r)-0.5;} We choose $4$ anchors with locations $[0.3,-0.3,-0.3]$, $[-0.3,0.3,-0.3]$, $[-0.3,-0.3,0.3]$, and $[0.3,0.3,0.3].$ For sensors $i$ and $j,$ we add $(i,j)$ in $\N$ if their distance is smaller than or equal to $(45/4\pi n)^{1/3}$ so that the average degree of $\N$ is bounded by $15.$ For sensor $i$ and anchor $k,$ we add $(i,k)$ in $\G$ if the distance between $i$ and $k$ is smaller than or equal to $0.33\sqrt{2}$ so that one sensor may be adjacent to more than one anchors but
 it won't be adjacent to too many anchors. Let $m_1:=|\G|$ and $m_2:=|\N|.$ For any $(i,j)\in \N,$ let $d_{ij}$ be the distance between sensors $i$ and $j$, we add random noise to $d_{ij}$ as follows:
\begin{center}
\texttt{d = abs(1+0.2*randn(m2,1)).*d;}
\end{center}
We use the same initial point for the three algorithms, which is randomly generated as $\texttt{R = rand(n,r)-0.5;}$  We use different seeds so that the random initial point is different from the randomly generated sensor matrix. We set the maximum number of iterations and maximum running time of all algorithms to be 10000 and 3600s, respectively. We terminate our algorithm if the norm of the Riemannian gradient is smaller than $10^{-5}.$ We terminate the other two algorithms if the violation of optimality is smaller than $10^{-5}$ and the violation of primal feasibility is smaller than $10^{-6}.$ We use the RMSE$:=\|R-R_S\|/\sqrt{n}$ to measure the accuracy of the output, where $R_S$ is the computed location matrix of the sensors. We don't show the result of an algorithm if it reaches the maximum running time and the solution is still quite inaccurate.

\begin{center}
\begin{footnotesize}
\begin{longtable}{|c|c|ccccc|l|}
\caption{Comparison of RDRSOM and SQP and IPM for nonlinear sensor network localization. }
\\
\hline
problem & Algorithm & Fval & Gradnorm & Pfeas& RMSE & Time [s] \\ \hline
\endhead
$n=100$& RDRSOM & 8.0281847e-02 & 9.91e-06 & 8.20e-17 & 2.74e-01 & 1.08e+00 \\
$m_1=95$ & SQP & 8.9146352e-02 & 1.66e-05 & 3.72e-15 & 2.92e-01 & 3.78e+00 \\
$m_2=508$ & IPM & 9.2069471e-02 & 3.06e-05 & 4.49e-12 & 3.10e-01 & 2.88e+00 \\
\hline
$n=200$& RDRSOM & 1.1867660e-01 & 9.36e-06 & 9.97e-17 & 3.26e-01 & 1.47e+00 \\
$m_1=175$ & SQP & 1.1706289e-01 & 1.55e-05 & 1.44e-12 & 2.85e-01 & 7.94e+01 \\
$m_2=1132$ & IPM & 1.1870512e-01 & 5.16e-05 & 3.67e-12 & 2.84e-01 & 2.26e+01 \\
\hline
$n=300$& RDRSOM & 1.0896989e-01 & 9.54e-06 & 1.06e-16 & 2.67e-01 & 4.55e+00 \\
$m_1=251$ & SQP & 1.1487691e-01 & 5.16e-05 & 6.00e-11 & 3.23e-01 & 2.72e+01 \\
$m_2=1856$ & IPM & 1.0360072e-01 & 6.43e-05 & 4.26e-10 & 2.26e-01 & 7.92e+01 \\
\hline
$n=400$& RDRSOM & 9.5185627e-02 & 1.00e-05 & 2.09e-16 & 2.77e-01 & 3.46e+00 \\
$m_1=341$ & SQP & 1.0500715e-01 & 5.23e-05 & 5.77e-13 & 3.41e-01 & 7.43e+01 \\
$m_2=2481$ & IPM & 9.2686891e-02 & 6.01e-05 & 4.19e-11 & 2.50e-01 & 1.04e+02 \\
\hline
$n=500$& RDRSOM & 8.6091412e-02 & 9.96e-06 & 3.83e-16 & 3.20e-01 & 5.28e+00 \\
$m_1=435$ & SQP & 8.7008298e-02 & 5.22e-05 & 3.10e-13 & 3.55e-01 & 5.34e+02 \\
$m_2=3031$ & IPM & 8.3023835e-02 & 7.81e-05 & 3.07e-09 & 2.63e-01 & 2.12e+02 \\
\hline
$n=1000$& RDRSOM & 8.0562748e-02 & 9.97e-06 & 1.91e-15 & 3.43e-01 & 1.03e+01 \\
$m_1=858$ & SQP & 8.1228996e-02 & 3.45e-04 & 2.76e-10 & 3.34e-01 & 2.60e+03 \\
$m_2=6467$ & IPM & 8.0534113e-02 & 9.12e-05 & 1.23e-10 & 3.34e-01 & 1.61e+03 \\
\hline
$n=2000$& RDRSOM & 5.4523129e-02 & 9.92e-06 & 1.62e-15 & 2.34e-01 & 6.84e+01 \\
$m_1=1702$ & SQP & - & - & - & - & - \\
$m_2=13272$ & IPM & - & - & - & - & - \\
\hline
$n=3000$& RDRSOM & 5.6133020e-02 & 9.85e-06 & 2.89e-15 & 3.27e-01 & 4.92e+01 \\
$m_1=2541$ & SQP & - & - & - & - & - \\
$m_2=20025$ & IPM & - & - & - & - & - \\
\hline
$n=5000$& RDRSOM & 4.8662073e-02 & 9.85e-06 & 1.97e-15 & 2.91e-01 & 1.76e+02 \\
$m_1=4186$ & SQP & - & - & - & - & - \\
$m_2=34274$ & IPM & - & - & - & - & - \\
\hline
$n=10000$& RDRSOM & 3.5716471e-02 & 9.94e-06 & 2.44e-15 & 2.96e-01 & 5.10e+02 \\
$m_1=8309$ & SQP & - & - & - & - & - \\
$m_2=69408$ & IPM & - & - & - & - & - \\
\hline
\end{longtable}
\end{footnotesize}
\end{center}

From Table 4, we can see that RDRSOM can solve all the problems to the required accuracy while SQP and IPM cannot solve problems of size beyond $1000.$ Moreover, RDRSOM is much more efficient than the other two algorithms. For some instances, RDRSOM is more than 100 times faster than the other two algorithms. This verifies the efficiency of our Riemannian optimization method in solving problem \cref{SNL}. Note that the residue of the optimality condition in \texttt{fmincon} is different from Riemannian gradient. Also, \texttt{fmincon} will terminate if the stepsize is too small. Thus, the norm of the Riemannian gradient of the output from SQP and IPM may not reach the accuracy of $10^{-6}.$
 Although RDRSOM can return a solution of higher accuracy than SQP and IPM, the RMSEs of the three algorithms are at the same level. One reason is that we have added noise to the distance measurements between different sensors. Thus, solving problem \cref{SNL} accurately doesn't imply exact estimation of the sensors' locations. Another reason is that problem \cref{SNL} is highly non-convex, the first order optimality cannot totally measure the quality of the solution. In order to improve the quality of the solution of \eqref{SNL}, we will warm start it by solving \cref{SNL1}. According to the classical rank bound for SDP problem \cite{PG}, problem \cref{SNLSDP} is equivalent to \cref{SNL} when $r\geq \sqrt{2\(|\N|+|\G|+d(d+1)/2\)}.$ Nonetheless, given the fact that all coefficient matrices of \cref{SNLSDP} are symmetric positive semidefinite when $\lambda=0$, we can employ an enhanced rank bound of
  $\O\(\log (|\N|+|\G|)\)$ by So, Ye and Zhang in \cite{so2008unified} and simply choose $r=20$ in \cref{SNL1}. In detail, we use RDRSOM to solve \cref{SNL1} until the norm of the 
  Riemannian gradient of $\hat{R}$ is smaller than $10^{-3}.$ Then we use the first $r$ columns of $\hat{R}$ to be the initial point of RDRSOM for solving \cref{SNL}. In the second stage, we use the same problem setting as mentioned before, with the only difference being in the initialization. We report the running time be the total running time of these two stages.

\begin{center}
\begin{footnotesize}
\begin{longtable}{|c|c|ccccc|l|}
\caption{Solving \cref{SNL} with initialization from \cref{SNL1}. }
\\
\hline
problem & Algorithm & Fval & Gradnorm & Pfeas& RMSE & Time [s] \\ \hline
\endhead
$n=100$& RDRSOM & 5.6524983e-02 & 9.30e-06 & 2.02e-15 & 9.06e-02 & 9.34e-01 \\
$n=200$& RDRSOM & 8.6382506e-02 & 9.10e-06 & 2.14e-15 & 9.02e-02 & 1.90e+00 \\
$n=300$& RDRSOM & 8.0980859e-02 & 9.57e-06 & 3.30e-15 & 7.13e-02 & 5.22e+00 \\
$n=400$& RDRSOM & 6.1333830e-02 & 9.96e-06 & 1.10e-16 & 5.15e-02 & 3.01e+00 \\
$n=500$& RDRSOM & 5.5472952e-02 & 9.63e-06 & 2.51e-15 & 6.27e-02 & 4.75e+00 \\
$n=1000$& RDRSOM & 4.4575380e-02 & 9.93e-06 & 3.48e-15 & 4.02e-02 & 7.72e+00 \\
$n=2000$& RDRSOM & 3.1852253e-02 & 9.93e-06 & 2.25e-15 & 3.01e-02 & 1.63e+01 \\
$n=3000$& RDRSOM & 2.6631692e-02 & 9.70e-06 & 2.93e-15 & 2.63e-02 & 2.37e+01 \\
$n=5000$& RDRSOM & 2.0595260e-02 & 9.99e-06 & 2.61e-15 & 2.12e-02 & 4.23e+01 \\
$n=10000$& RDRSOM & 1.1507971e-02 & 9.84e-06 & 3.25e-15 & 1.68e-02 & 1.22e+02 \\
\hline
 \end{longtable}
\end{footnotesize}
\end{center}

Comparing Table 4 and Table 5, we can see that the warm-start strategy significantly improves the solution of the nonlinear model. Specifically, the function value has decreased significantly and the root mean square errors (RMSEs) associated with the new method are all below $0.1$. Moreover, the new approach with warm-start strategy is even faster than previous single-stage method for most test instances. This is because our low rank SDP model \eqref{SNL1} alleviates the non-convexity of \eqref{SNL} while maintaining small dimensionality. These results provide empirical evidence for the efficiency and effectiveness of the low-rank SDP formulation \eqref{SNL1} for solving \eqref{SNL}.

\section{Conclusion}
In this paper, we proposed a cubic-regularized Riemannian dimension reduced second order method RDRSOM, which partially exploits the second order information. We establish the iteration
complexity of $\O(1/\epsilon^{3/2})$ for a theoretical version of RDRSOM, 
and the complexity of $\O(1/\epsilon^2)$ for a more 
practical version of RDRSOM, where the latter put less restriction on the subspace 
chosen for the subproblem.
 We apply our algorithm to solve a nonlinear formulation of the sensor network localization problem. The efficiency of RDRSOM is clearly demonstrated in numerical experiments 
 as compared to other Riemannian optimization methods and nonlinear solvers.

%\section*{Acknowledgments}

\bibliographystyle{siamplain}
\bibliography{rdrsom-2}
\end{document}